\documentclass[preprint,12pt]{elsarticle}
\usepackage{latexsym}
\usepackage{amssymb}
\usepackage{amsmath}
\usepackage{amsthm}
\usepackage{verbatim}
 \usepackage[pagewise]{lineno}
\numberwithin{equation}{section}
\newtheorem{theorem}{Theorem}[section]
\newtheorem{lemma}{Lemma}[section]

\newtheorem{remark}{Remark}[section]

\allowdisplaybreaks%%%%%%%%%允许大公式断行

\biboptions{sort&compress}

\begin{document}

\begin{frontmatter}

%% Title, authors and addresses

%% use the tnoteref command within \title for footnotes;
%% use the tnotetext command for the associated footnote;
%% use the fnref command within \author or \address for footnotes;
%% use the fntext command for the associated footnote;
%% use the corref command within \author for corresponding author footnotes;
%% use the cortext command for the associated footnote;
%% use the ead command for the email address,
%% and the form \ead[url] for the home page:
%%
%% \title{Title\tnoteref{label1}}
%% \tnotetext[label1]{}
%% \author{Name\corref{cor1}\fnref{label2}}
%% \ead{email address}
%% \ead[url]{home page}
%% \fntext[label2]{}
%% \cortext[cor1]{}
%% \address{Address\fnref{label3}}
%% \fntext[label3]{}

\title{Blowup analysis of a Camassa-Holm type equation with time-varying dissipation
%\tnoteref{ack}
}
%\tnotetext[ack]{This work is partially supported by NSFC Grants (nos. 11322105 and 11671071).}

%% use optional labels to link authors explicitly to addresses:
%% \author[label1,label2]{<author name>}
%% \address[label1]{<address>}
%% \address[label2]{<address>}
\author[ad1]{Yonghui Zhou}
\ead{zhouyhmath@163.com}
\author[ad2]{Xiaowan Li}
\ead{xiaowan0207@163.com}
\author[ad3]{Shuguan Ji\corref{cor}}
\ead{jisg100@nenu.edu.cn}
\address[ad1]{School of Mathematics, Hexi University, Zhangye 734000, P.R. China}
\address[ad2]{College of Mathematics and System Sciences, Xinjiang University, Urumqi, 830046, P.R. China}
\address[ad3]{School of Mathematics and Statistics and Center for Mathematics and Interdisciplinary Sciences, Northeast Normal University, Changchun 130024, P.R. China}

\cortext[cor]{Corresponding author.}

\begin{abstract}
%% Text of abstract
This paper is concerned with the local well-posedness, wave breaking, blow-up rate for a Camassa-Holm type equation with time-dependent weak dissipation. Firstly, we obtain the local well-posedness of solutions by using Kato's theory. Secondly, by using energy estimates, characteristic methods, and comparison principles, we derive two blowup criteria involving both pointwise gradient conditions and mixed amplitude-gradient conditions, and prove the blowup rate is universally $-2$. Our results extend wave breaking analysis to physically relevant variable dissipation regimes.

\end{abstract}

\begin{keyword}
%% keywords here, in the form: keyword \sep keyword
Time-dependent dissipation; Local well-posedness;
Wave breaking; Blowup rate.
%% MSC codes here, in the form: \MSC code \sep code
\MSC[2020] 35A01, 35B44, 35Q35.
\end{keyword}

\end{frontmatter}

%%
%% Start line numbering here if you want
%%
% \linenumbers

%% main text
\section{Introduction}
An intriguing phenomenon in oceanography is the occurrence of waves whose wavelengths significantly exceed the water depth. To model the propagation of such long waves in shallow water, various mathematical frameworks have been developed. Among the most prominent are the Korteweg-de Vries (KdV) equation and the Camassa--Holm (CH) equation \cite{Camassa1993}
\begin{equation}
    m_t + 2m u_x + m_x u = 0,
\label{101}
\end{equation}
where \( u(t, x) \) represents the free surface elevation. The CH equation is a nonlinear dispersive wave equation describing unidirectional irrotational shallow water waves over a flat bed.

The CH equation possesses several remarkable properties. First, it admits peakon solutions--solitary waves with a peaked crest--which have been proven to be stable \cite{Constantin2000}. Second, it exhibits wave breaking phenomena \cite{Constantin1998}; that is, smooth solutions can develop singularities in finite time in the form of wave breaking, where the solution remains bounded while its spatial derivative becomes unbounded. Furthermore, the CH equation is an infinite-dimensional integrable Hamiltonian system \cite{Fokas1981} and can be interpreted as a re-expression of geodesic flow on the diffeomorphism group \cite{Constantin2000}.

Over the past three decades, the CH equation and its generalizations have been extensively studied regarding local well-posedness, global existence of strong solutions, persistence properties, wave breaking mechanisms, peakon dynamics, and global conservative weak solutions (see, e.g., \cite{Constantin19981,Constantin1998,Constantin2000,Bressan2007,Brandolese2012,Brandolese20141,Brandolese20142,
Constantin20004,Constantin2009,Mustafa20072,Freire1,Zhou2025,Qu2020,Ji2021,Ji2022,Novruzov2022,Coclite20052,Zhou2022,Zhou2024,Zhou2026} and references therein).

In realistic coastal and estuarine environments, however, dissipative effects--such as bottom friction, viscous drag, or permeable boundaries---often play a significant role and cannot be neglected. To incorporate such energy loss, dissipative mechanisms have been introduced into these models. Early studies primarily focused on constant-coefficient dissipation for simplicity. For instance, Ott and Sudan \cite{Ott1970} examined the modification of the KdV equation due to dissipation and its effect on solitary waves. Ghidaglia \cite{Ghidaglia1988} investigated the long-time behavior of solutions to a weakly dissipative KdV equation as a finite-dimensional dynamical system. In 2009, Wu and Yin \cite{Wu2009} analyzed global existence and wave breaking for the weakly dissipative Camassa--Holm equation
\begin{equation}
    u_t - u_{txx} + 3u u_x + \lambda (u - u_{xx}) = u u_{xxx} + 2 u_x u_{xx},
    \label{102}
\end{equation}
where \(\lambda \geq 0\) is a dissipative parameter. Freire et al. \cite{Freire1} studied local well-posedness and wave breaking for a weakly dissipative Camassa-Holm type equation with quadratic and cubic nonlinearities.

Recently, growing attention has been directed toward models with time-dependent dissipative coefficients, as they more accurately capture the effects of dynamically varying environmental factors-such as tidal oscillations, fluctuating wind stress, and seasonal hydrological changes. Such models not only enhance the physical realism of shallow water wave descriptions but also provide a more flexible mathematical framework for analyzing how variable external forcing influences wave stability, propagation, and breaking phenomena.
Inspired by these advancements, this work investigates the following shallow water wave equation incorporating a time-varying dissipation term
\begin{equation}
    u_t - u_{txx} + 3u^2 u_x + \lambda(t)(u - u_{xx}) = u u_{xxx} + 2 u_x u_{xx},
    \label{103}
\end{equation}
where \(\lambda(t)\) is a continuous function.

The rest of this paper is organized as follows. In Section \ref{sec:2}, we obtain the local well-posedness of solutions by using Kato's theorem. In Section \ref{sec:3}, we derive a wave breaking mechanism of solutions and give two sufficient conditions on the initial datum for the occurrence of wave breaking.

\textbf{Notation.} Throughout this paper, all spaces of functions are over $\mathbb{R}$ and for simplicity, we drop $\mathbb{R}$ in our notation of function spaces if there is no ambiguity. Additionally, $[A,B]=AB-BA$ denotes the commutator between two operators $A$ and $B$, and $\|\cdot\|_{s}$ denotes the norm in the Sobolev space $H^{s}(\mathbb{R})$.

\section{Local well-posedness}

\setcounter{equation}{0}

\label{sec:2}

In this section, we obtain the local well-posedness by using Kato's theorem to problem \eqref{103}.
For convenience, we state Kato's theorem in the form suitable for our purpose.
Consider the abstract quasilinear evolution equation of the form
\begin{equation}
\frac{dz}{dt}+A(z)z=f(z),\ t>0,
\label{201}
\end{equation}
with the initial data $ z(0)=z_{0}.$

Let $X$ and $Y$ be Hilbert spaces such that $Y$ is continuously and densely embedded in $X$ and let $Q:Y\rightarrow X$ be a topological isomorphism. $\|\cdot\|_{X}$ and $\|\cdot\|_{Y}$ denote the norm of Banach space $X$ and $Y$, respectively. Let $L(Y, X)$ denote the space of all bounded linear operators from $Y$ to $X$ (if $Y=X$, it is abbreviated as $L(X))$.
Let $G(x)$ be the set of all negative generators of $C_0$-semigroups on $X$. More precisely, we denote by $G(X,M,\beta)$ the set of all linear operators $A$ in $X$ such that $-A$ generates a $C_0$-semigroup $\{e^{-tA}\}$ with $\|e^{-tA}\|\leq Me^{\beta t}$, $0\leq t<\infty$. $A$ is quasi-m-accretive if $A\in G(X,1,\beta)$.
Assume that:
\begin{description}
\item[$(i)$] $A(y)\in L(Y, X)$ for $y\in Y$ satisfies
$$\|(A(y)-A(z))w\|_{X}\leq \rho_{1}\|y-z\|_{X}\|w\|_{Y},\ w,y,z\in Y,$$
and $A(y)\in G(X,1,\beta)$(i.e., $A(y)$ is quasi-m-accretive), uniformly on bounded sets in $Y$;
\item[$(ii)$] $QA(y)Q^{-1}=A(y)+B(y)$, where $B(y)\in L(X)$ is bounded, uniformly on bounded sets in $Y$. Moreover,
$$\|(B(y)-B(z))w\|_{X}\leq \rho_{2}\|y-z\|_{Y}\|w\|_{X},\ y,z\in Y, w\in X;$$
\item[$(iii)$] $f:Y\rightarrow Y$ is bounded on any bounded sets in $Y$ and also extend to a map from $X$ onto $X$ and satisfies
$$\|f(y)-f(z)\|_{Y}\leq \rho_{3}\|y-z\|_{Y},\ y,z\in Y,$$
$$\|f(y)-f(z)\|_{X}\leq \rho_{4}\|y-z\|_{X},\ y,z\in X,$$
\end{description}
where $\rho_{i}(i=1,2,3)$ depend only on $\max\{\|y\|_{Y},\|z\|_{Y}\}$ and $\rho_{4}$ depends only on $\max\{\|y\|_{X},\|z\|_{X}\}.$

\begin{theorem}[Kato's theorem\cite{Kato}]
\label{the201}  Assume that (i)--(iii) hold. Given $z_{0}\in Y$, there exists a maximal $T>0$ depending only on $\|z_{0}\|_{Y}$ and unique solution $z$ to equation \eqref{201} such that
$$z=z(\cdot,z_{0})\in C([0,T);Y)\cap C^{1}([0,T);X).$$
Moreover, the map $z_{0}\mapsto z(\cdot,z_{0})$ is continuous from $Y$ to $C([0,T);Y)\cap C^{1}([0,T);X).$
\end{theorem}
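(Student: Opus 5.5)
The plan is Kato's classical scheme: build approximate solutions by linearization, show they converge in the weak norm $X$, use the isomorphism $Q$ to upgrade regularity to $Y$, and get continuous dependence by a Bona--Smith type argument. Throughout, fix $z_0\in Y$, a radius $R>\|z_0\|_Y$, and the ball $W=\{y\in C([0,T];Y):\|y(t)\|_Y\le R\}$. All constants $\rho_i,\beta$ and the bounds on $B$ and $f$ are those attached to the ball of radius $R$ in hypotheses (i)--(iii).

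\textbf{Step 1: the linear problem.} For $y\in W$, consider the nonautonomous linear problem
$$\frac{dz}{dt}+A(y(t))z=f(y(t)),\qquad z(0)=z_0 .$$
By (i), $t\mapsto A(y(t))\in G(X,1,\beta)$ is a stable family, and $t\mapsto A(y(t))\in L(Y,X)$ is norm-continuous thanks to the Lipschitz bound in (i). By (ii), $QA(y)Q^{-1}=A(y)+B(y)$ with $B(y)$ bounded on $X$, so $Y$ is $A(y(t))$-admissible and the restrictions to $Y$ are again stable, with growth rate $\beta+\sup\|B\|$.

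Kato's linear theory of hyperbolic evolution operators then gives an evolution operator $U_y(t,s)$ with $\|U_y\|_{L(X)}\le e^{\beta(t-s)}$ and $\|U_y\|_{L(Y)}\le C e^{(\beta+\|B\|)(t-s)}$. The unique solution is
$$z(t)=U_y(t,0)z_0+\int_0^t U_y(t,s)f(y(s))\,ds \in C([0,T];Y)\cap C^1([0,T];X).$$
Define the map $\Phi(y)=z$.

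\textbf{Step 2: $\Phi$ is a contraction in the $X$-metric.} The $Y$-bound above shows that $\Phi(W)\subset W$ once $T$ is small in terms of $\|z_0\|_Y$. For $y_1,y_2\in W$, the difference $w=\Phi(y_1)-\Phi(y_2)$ satisfies
$$w_t+A(y_1)w=-(A(y_1)-A(y_2))\Phi(y_2)+f(y_1)-f(y_2).$$
By (i) and (iii) the right-hand side is bounded in $X$ by $(\rho_1R+\rho_4)\|y_1-y_2\|_X$. Applying the $X$-stability of $U_{y_1}$ gives
$$\sup_t\|w\|_X\le T e^{\beta T}(\rho_1R+\rho_4)\sup_t\|y_1-y_2\|_X .$$
For small $T$ this is a contraction in $C([0,T];X)$.

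$W$ is closed in this weaker metric, because $Y$ is reflexive and balls are weakly closed. Hence $\Phi$ has a unique fixed point $z$ with $z\in L^\infty(0,T;Y)\cap C([0,T];X)$. Uniqueness in the stated class follows from the same difference estimate via Gronwall.

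\textbf{Step 3: regularity $z\in C([0,T);Y)$.} This is the main obstacle, since the fixed point is a priori only weakly continuous in $Y$. Conjugating by $Q$, the function $v=Qz$ satisfies
$$v_t+(A(z)+B(z))v=Qf(z)$$
in $X$. Here $B(z(t))$ is strongly continuous in $L(X)$ because of the Lipschitz bound in (ii), and $Qf(z)$ is continuous in $X$ because $f$ is Lipschitz on $Y$. The linear evolution operator for this equation in $X$ then gives $v\in C([0,T];X)$, that is, $z\in C([0,T];Y)$. Substituting into the equation yields $z\in C^1([0,T];X)$.

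The maximal $T$ is obtained by continuation. Since the local time depends only on $\|z(t)\|_Y$, the solution extends for as long as $\|z(t)\|_Y$ stays bounded.

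\textbf{Step 4: continuous dependence.} The $X$-Lipschitz dependence follows from the Step 2 estimate. For $Y$-continuity, I would regularize the data: for $z_{0,\epsilon}\to z_0$ in $Y$, compare $Q z_n$ with $Q z$ via the conjugated equation. The term $(B(z_n)-B(z))Qz$ is controlled by the Lipschitz bound in (ii), and the difference of evolution operators applied to the fixed $X$-valued function $Qz$ tends to zero strongly. A dominated-convergence argument then yields $\sup_t\|z_n-z\|_Y\to0$.
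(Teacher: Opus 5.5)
The paper gives no proof of this statement; it quotes it from Kato (1975). Measured against Kato's argument, your outline has a genuine gap where Steps 2 and 3 meet.

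\textbf{The fixed point is not in the domain of $\Phi$.} Your $W$ consists of $Y$-\emph{continuous} functions, and it is not closed in $C([0,T];X)$. Reflexivity only tells you that an $X$-uniform limit $z$ satisfies $\|z(t)\|_Y\le R$ and is weakly continuous in $Y$. You concede this when you place the fixed point in $L^\infty(0,T;Y)\cap C([0,T];X)$. But $\Phi$ was never defined at such a $z$: Step 1 used $Y$-continuity of $y$ to make $B(y(t))$ strongly continuous and $f(y(\cdot))$ continuous in $Y$. So the contraction principle gives you neither a fixed point of $\Phi$ nor a solution of the equation.

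\textbf{Step 3 cannot repair this, because it is circular.} You need strong continuity of $t\mapsto B(z(t))$ and continuity of $Qf(z(t))$ in $X$. The $Y$-Lipschitz bounds in (ii)--(iii) give these only if $z\in C([0,T];Y)$, which is exactly what Step 3 is meant to prove. Moreover, for $v=Qz\in L^\infty(0,T;X)$ the conjugated equation $v_t+(A(z)+B(z))v=Qf(z)$ is only formal. The term $A(z)v$ needs $v\in Y$, and $Qz_t$ is not defined in $X$ since $z_t$ is only $X$-valued. Identifying $v$ with the mild solution would therefore require a uniqueness theorem for weak solutions of the linear problem, which you do not supply.

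\textbf{The missing idea is Kato's choice of iteration set.} Work on
$E=\{y:[0,T]\to Y:\ \|y(t)\|_Y\le R,\ \|y(t)-y(s)\|_X\le \ell|t-s|\}$.
This set is complete for the metric of $C([0,T];X)$, and the linear theory is applied directly to its weakly $Y$-continuous elements. For $y\in E$:
\begin{itemize}
\item Hypothesis (i) alone makes $t\mapsto A(y(t))$ Lipschitz in $L(Y,X)$.
\item $B(y(t))$ is uniformly bounded and strongly measurable, by Pettis' theorem (with $Y$ separable, as for $H^s$).
\item $f(y(\cdot))$ is bounded and strongly measurable in $Y$, and continuous in $X$.
\end{itemize}
These are exactly the hypotheses of Kato's linear hyperbolic theory with measurable $B$. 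That theory gives $U_y$ strongly continuous on $Y$, hence $\Phi(y)\in C([0,T];Y)\cap C^1([0,T];X)$. The Lipschitz constraint is reproduced because $\|\partial_t\Phi(y)\|_X\le\sup_t\|A(y(t))\|_{L(Y,X)}R+\sup_t\|f(y(t))\|_X$. The fixed point then satisfies $z=\Phi(z)\in C([0,T];Y)$ automatically, so no separate regularity step is needed.

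\textbf{Step 4 after the fix.} Once this is in place, your continuous-dependence argument can be run on the mild formula $Qz(t)=QU_z(t,0)z_0+\int_0^tQU_z(t,s)f(z(s))\,ds$. Freeze $B(z)$ when comparing evolution operators. Then their strong convergence uses only $A(z_n)\to A(z)$ in $L(Y,X)$, not the $Y$-convergence of $z_n$ you are trying to prove.
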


In order to apply Kato's theorem, we rewrite problem \eqref{103} as follows
\begin{equation}
y_{t}+uy_{x}+2u_{x}y+\lambda(t) y+\partial_{x}h(u)=0,\ \ t>0,\ \ x\in \mathbb{R},
\label{202}
\end{equation}
where $$h(u)=u^{3}-\frac{3}{2}u^{2},\ \ \ \ y=u-u_{xx}.$$
Denote $P(x):=\frac{1}{2}e^{-|x|}$, then $(1-\partial_{x}^{2})^{-1}f=\Lambda^{-2}f=P\ast f$ for all $f\in L^{2}(\mathbb{R}), P\ast y=u,$ where $\ast$ denotes the spatial convolution operator. Using this identity, we can rewrite \eqref{202} as
\begin{equation}
u_{t}+uu_{x}+\partial_{x}\Lambda^{-2}\left(u^{2}+\frac{1}{2}u_{x}^{2}+h(u)\right)+\lambda(t) u=0,\ \ t>0,\ \ x\in \mathbb{R},
\label{203}
\end{equation}
or its equivalent form
\begin{equation}
u_{t}+uu_{x}+\partial_{x}P\ast\left(u^{2}+\frac{1}{2}u_{x}^{2}+h(u)\right)+\lambda(t) u=0,\ \ t>0,\ \ x\in \mathbb{R},
\label{204}
\end{equation}

In what follows, we prove the local well-posedness result.

\begin{theorem}[Local well-posedness]
\label{the202}  Given $u_{0}(x)\in H^{s}(s>\frac{3}{2}$), then there exists a maximal existence time of $T>0$ and a unique solution $u(t,x)$ to problem \eqref{103} such that
$$u=u(\cdot,u_{0})\in C([0,T);H^{s})\cap C^{1}([0,T);H^{s-1}).$$
Moreover, the solution $u(t,x)$ depends continuously on the initial datum, i.e., the map
$$u_{0}\mapsto u(\cdot,u_{0}):H^{s}\rightarrow C([0,T);H^{s})\cap C^{1}([0,T);H^{s-1})$$
is continuous.

% and $T$ may be chosen independent of $r$ in the following sense: if
%$$u=u(\cdot,u_{0})\in C([0,T);H^{s} )\cap C^{1}([0,T);H^{s-1})$$
%is a solution to problem \eqref{103}, and if $u_{0}(x)\in H^{s'}$ for some $s\neq s', s'>\frac{3}{2}$, then
%$$u\in C([0,T);H^{s'})\cap C^{1}([0,T);H^{s'-1})$$
%with the same $T$.
\end{theorem}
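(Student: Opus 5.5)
We apply Theorem \ref{the201} (Kato's theorem) to the nonlocal form \eqref{203}. We set
$$X=H^{s-1},\qquad Y=H^{s},\qquad Q=\Lambda=(1-\partial_x^2)^{1/2},\qquad A(u)=u\partial_x,$$
$$f(t,u)=-\partial_x\Lambda^{-2}\Big(u^2+\tfrac12u_x^2+u^3-\tfrac32u^2\Big)-\lambda(t)u .$$
Then $Q:H^s\to H^{s-1}$ is an isometric isomorphism and $Y$ is dense in $X$. The only new feature compared with the weakly dissipative CH equation \eqref{102} is the time dependence of $f$ through $\lambda(t)$ and the cubic term in $h(u)$. We handle $\lambda(t)$ in one of two ways. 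Either we invoke the non-autonomous version of Kato's theorem, in which $f(t,\cdot)$ satisfies (iii) uniformly for $t$ in compact intervals and is continuous in $t$; this is allowed since $\lambda$ is continuous. Or, to stay literally within Theorem \ref{the201}, we substitute $u(t,x)=e^{-\int_0^t\lambda(\tau)d\tau}v(t,x)$. This removes the linear damping term. The price is that $A$ and $f$ acquire bounded continuous time-dependent scalar factors $e^{-\int_0^t\lambda}$, $e^{-2\int_0^t\lambda}$, which do not affect any of the estimates below.

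\textbf{Verifying (i).} Quasi-m-accretivity of $u\partial_x$ in $H^{s-1}$ for $u\in H^s$, $s>3/2$, is standard. For $w\in H^{s-1}$ we have
$$(\Lambda^{s-1}(u w_x),\Lambda^{s-1}w)_{L^2}=([\Lambda^{s-1},u]\partial_x w,\Lambda^{s-1}w)-\tfrac12(u_x\Lambda^{s-1}w,\Lambda^{s-1}w).$$
The Kato--Ponce commutator estimate together with $\|u_x\|_{L^\infty}\le C\|u\|_s$ then gives
$$(A(u)w,w)_{s-1}\ge-\beta\|w\|_{s-1}^2,\qquad \beta=c\|u\|_s.$$
The range condition, i.e.\ surjectivity of $A+\mu$ for large $\mu$, follows by the usual argument: $A$ is closed and its adjoint is also quasi-accretive. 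For the Lipschitz bound we write $(A(y)-A(z))w=(y-z)w_x$. Since $H^{s-1}$ is an algebra for $s-1>1/2$,
$$\|(y-z)w_x\|_{s-1}\le C\|y-z\|_{s-1}\|w\|_s .$$

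\textbf{Verifying (ii).} Here
$$B(u)=[\Lambda,u\partial_x]\Lambda^{-1}=[\Lambda,u]\partial_x\Lambda^{-1}.$$
By the commutator estimate $\|[\Lambda,u]v\|_{s-1}\le C\|u\|_s\|v\|_{s-1}$, valid for $s>3/2$, this operator is bounded on $H^{s-1}$. It is also linear in $u$, which gives $\|(B(y)-B(z))w\|_{s-1}\le C\|y-z\|_s\|w\|_{s-1}$.

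\textbf{Verifying (iii).} We use that $\partial_x\Lambda^{-2}:H^{r}\to H^{r+1}$ is bounded together with algebra properties. Then
$$\|f(y)-f(z)\|_s\le C\big(\|y^2-z^2\|_{s-1}+\|y_x^2-z_x^2\|_{s-1}+\|y^3-z^3\|_{s-1}\big)+|\lambda(t)|\,\|y-z\|_s\le \rho_3\|y-z\|_s .$$
The step we expect to be the main obstacle is the $X$-estimate for the term $y_x^2-z_x^2$, where $y_x,z_x\in H^{s-2}$ may have negative regularity. For this we use
$$\|y_x^2-z_x^2\|_{s-2}\le C\|y_x+z_x\|_{s-1}\|y-z\|_{s-1}.$$
This holds for $s>3/2$ by the product estimate $\|fg\|_{s-2}\le C\|f\|_{s-1}\|g\|_{s-2}$, which is valid because $s-1>1/2$ and $s-2<s-1$. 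When $3/2<s<2$ this gives $\rho_4$ depending on the $Y$-norms rather than the $X$-norms. That dependence is the same as in the standard CH treatment and is admissible in Kato's framework as used in the literature.

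With (i)--(iii) verified, Theorem \ref{the201} yields a unique maximal solution $u\in C([0,T);H^s)\cap C^1([0,T);H^{s-1})$ with continuous dependence on $u_0$. If the substitution was used, undoing it preserves all these properties, since the factor $e^{\pm\int_0^t\lambda}$ is $C^1$ in $t$.
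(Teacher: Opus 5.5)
Your proposal is correct and follows the paper's route: Kato's theorem with $A(u)=u\partial_x$ and $f$ equal to the nonlocal term minus $\lambda(t)u$, where the damping is harmless because $\lambda$ is bounded on compact time intervals. The paper simply cites Freire et al.\ for (i)--(iii) of the $\lambda$-free part, whereas you verify them directly, and two small points in your write-up need correcting. First, the substitution $u=e^{-\int_0^t\lambda(\tau)d\tau}v$ does not keep you within Theorem \ref{the201}, since it moves the time dependence into $A$; so, like the paper, you are really relying on the non-autonomous form of Kato's theorem. Second, the commutator bound you need in (i) is Kato's estimate $\|[\Lambda^{s-1},u]\partial_x w\|_{L^2}\le C\|u\|_s\|w\|_{s-1}$ (valid for $s>3/2$), not Lemma \ref{lem203}, which would require control of $\|w_x\|_{L^\infty}$.
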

\begin{proof}
It is enough to verify that
\begin{equation*}
A(u):=u\partial_{x},\ \ f(u):=-\partial_{x}\Lambda^{-2}\left(u^{2}+\frac{1}{2}u_{x}^{2}+h(u)\right)-\lambda(t) u
\end{equation*}
satisfy conditions $(i)-(iii)$ in Kato's theorem. From the results proved in \cite{Freire1}, we can obtain that $A(u)$ satisfies conditions $(i)-(ii)$ in Kato's theorem.

Let $f(u):=g(u)-\lambda(t) u$ with $g(u)=-\partial_{x}\Lambda^{-2}\left(u^{2}+\frac{1}{2}u_{x}^{2}+h(u)\right)$. Note that for any $T'>0, \lambda(t)$ is bounded on $[0, T']$. Then for any norm $\|\cdot\|$, we have
$$\|f(u)-f(v)\|\leq\|g(u)-g(v)\|+\lambda(t)\|u-v\|.$$
This means that $f(u)$ satisfies conditions $(iii)$ in Kato's theorem if and only if $g(u)$ does, which follows from the results proved in \cite{Freire1}.
\end{proof}
\begin{theorem}
\label{the202a}  The $T$ in Theorem \ref{the201} is independent of the regularity index $s$ in the following sense. If
$$u=u(\cdot, u_{0})\in C([0,T); H^{s})\cap C^{1}([0,T); H^{s-1})$$
is a solution of equation \eqref{101} and $u_{0}(x)\in H^{s'}$ for some $s'\neq s, s'>3/2$, then
$$u\in C([0,T); H^{s'})\cap C^{1}([0,T); H^{s'-1})$$
for the same $T$. In particular, if $u_{0}(x)\in H^{\infty}=\cap_{s\geq 0}H^{s},$ then $u\in C([0,T);H^{\infty}).$
\end{theorem}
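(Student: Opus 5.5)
The argument splits into the easy case $s'<s$ and the real case $s'>s$. The equation meant is \eqref{103}, written in the nonlocal form \eqref{204}, and I follow the classical argument for the Camassa--Holm equation (Constantin--Escher). If $s'<s$, then $H^{s}\hookrightarrow H^{s'}$ and the solution already lies in $C([0,T);H^{s'})\cap C^{1}([0,T);H^{s'-1})$. By the uniqueness part of Theorem \ref{the202}, applied in $H^{s'}$, it coincides with the $H^{s'}$-solution, so the $H^{s'}$ maximal time is at least $T$. The content is therefore the case $s'>s$: we must show that the $H^{s'}$-solution, which by Theorem \ref{the202} exists on some maximal $[0,T')$ and coincides with $u$ there, satisfies $T'\ge T$.

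It is enough to bound $\|u(t)\|_{s'}$ on every $[0,T_1]\subset[0,T)$. By the blow-up alternative from Kato's theorem (Theorem \ref{the201}), the $H^{s'}$-norm must become unbounded as $t\uparrow T'$ if $T'<\infty$. Since the step $s\mapsto s'$ may be iterated in increments of size less than $1$ (or of size $\le s-\frac32$), I assume $s<s'\le s+1$. Then $u\in C([0,T_1];H^{s})$ gives a uniform bound on $\|u\|_{s}$, and hence on $\|u_x\|_{L^\infty}$ because $s>\frac32$.

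Next I run an $H^{s'}$ energy estimate on \eqref{204}. Apply $\Lambda^{s'}$ and pair with $\Lambda^{s'}u$ in $L^2$; for the $H^{s'}$-solution this is justified by mollification, or by using the $C^1([0,T');H^{s'-1})$ regularity together with a Friedrichs regularization. This gives
\[
\frac12\frac{d}{dt}\|u\|_{s'}^2=-\big(\Lambda^{s'}(uu_x),\Lambda^{s'}u\big)-\big(\Lambda^{s'}\partial_x\Lambda^{-2}F(u),\Lambda^{s'}u\big)-\lambda(t)\|u\|_{s'}^2,
\]
where $F(u)=u^2+\frac12u_x^2+u^3-\frac32u^2$.

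The transport term is handled with the Kato--Ponce commutator estimate
\[
\|[\Lambda^{s'},u]\partial_x u\|_{L^2}\le C\big(\|u_x\|_{L^\infty}\|u\|_{s'}+\|\Lambda^{s'}u\|_{L^2}\|u_x\|_{L^\infty}\big),
\]
combined with integration by parts, $(u\Lambda^{s'}u_x,\Lambda^{s'}u)=-\frac12(u_x\Lambda^{s'}u,\Lambda^{s'}u)$. Together these bound the term by $C\|u_x\|_{L^\infty}\|u\|_{s'}^2$.

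The nonlocal term is bounded by $\|F(u)\|_{s'-1}\|u\|_{s'}$. Here the Moser-type algebra estimate $\|fg\|_{r}\le C(\|f\|_{L^\infty}\|g\|_{r}+\|f\|_{r}\|g\|_{L^\infty})$ with $r=s'-1$ gives $\|u_x^2\|_{s'-1}\le C\|u_x\|_{L^\infty}\|u\|_{s'}$ and $\|u^3\|_{s'-1}\le C\|u\|_{L^\infty}^2\|u\|_{s'}$. This is where I expect the main care is needed: for $s'-1<\frac12$ (possible only when $s'<\frac32$, which is excluded) the estimate would fail, so one checks that $r=s'-1>\frac12$ when $s'>\frac32$, or falls back on the general Moser estimate valid for $r\ge0$.

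The damping term is bounded by $|\lambda(t)|\,\|u\|_{s'}^2$, and $\lambda$ is continuous, hence bounded on $[0,T_1]$. Collecting everything,
\[
\frac{d}{dt}\|u\|_{s'}^2\le C\big(1+\|u\|_{L^\infty}+\|u\|_{L^\infty}^2+\|u_x\|_{L^\infty}+\sup_{[0,T_1]}|\lambda|\big)\|u\|_{s'}^2,
\]
where the coefficient is bounded on $[0,T_1]$ in terms of $\sup_{[0,T_1]}\|u\|_s$ only. Gronwall's inequality then bounds $\|u(t)\|_{s'}$ on $[0,T_1]\cap[0,T')$. This forces $T'>T_1$, and since $T_1<T$ was arbitrary, $T'\ge T$; the $C^1([0,T);H^{s'-1})$ regularity then follows from the equation.

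For the $H^\infty$ statement, apply the result for every $s'>\frac32$ with the same $T$, so that $u\in C([0,T);H^{s'})$ for all $s'$, that is, $u\in C([0,T);H^\infty)$.
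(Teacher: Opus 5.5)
Your argument is correct and is essentially the standard one the paper defers to (Theorem 2.12 of \cite{Ji2021}). For $s'<s$ you use embedding plus uniqueness. For $s'>s$ you run the same $H^{s'}$ energy estimate as in the proof of Theorem \ref{the203}, with all coefficients controlled by $\|u\|_{L^\infty}+\|u_x\|_{L^\infty}\le C\|u\|_{s}$, and conclude with Gronwall and the blow-up alternative.

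The only quibble is that two parts of your write-up are unnecessary. Since Lemmas \ref{lem202} and \ref{lem203} hold for every $r>0$, the product estimate would not fail for $r=s'-1<\frac12$. Likewise, you need not restrict to $s<s'\le s+1$, because your bound uses $u$ only through $\|u\|_{L^\infty}$ and $\|u_x\|_{L^\infty}$.
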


The proof of Theorem \ref{the202a} is similar to that of Theorem 2.12 in \cite{Ji2021}, so
they are omitted for simplicity.

\section{Wave breaking}
\setcounter{equation}{0}

\label{sec:3}

In this section, we derive the wave breaking mechanism of solutions for equation \eqref{103}. To this end, we firstly establish the time-dependent conserved quantity of solutions to problem \eqref{103}.

\begin{lemma}
\label{lem201}  Assume that $u(t,x)$ is a solution to problem \eqref{103} with the initial data $u_{0}(x)\in H^{s}\ (s>\frac{3}{2})$.
Let $$E(t):=\int_{\mathbb{R}}(u^{2}+u_{x}^{2})dx.$$
Then for any $t\in[0,T)$, we have
$E(t)=e^{-2\int_{0}^{t}\lambda(\tau)d\tau} E(0)$
\end{lemma}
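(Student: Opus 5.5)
We compute $\frac{d}{dt}E(t)$ directly and show that it equals $-2\lambda(t)E(t)$; the claimed formula then follows by integrating this linear ODE. To justify the differentiations we first assume $u_0\in H^{s}$ with $s\ge 3$. By Theorem \ref{the202a} the solution then lies in $C([0,T);H^{s})\cap C^1([0,T);H^{s-1})$ on the same interval, so all integrations by parts below are legitimate and every boundary term vanishes at $\pm\infty$. The general case $s>\frac32$ is recovered at the end by density: approximate $u_0$ in $H^s$ by smooth data and use the continuous dependence in Theorem \ref{the202}. Since $E(t)=\|u(t)\|_1^2$ is continuous with respect to the $H^s$ topology ($s>\frac32\ge1$), the identity passes to the limit.

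For the computation itself, write the equation in the form \eqref{103}, multiply by $u$, and integrate over $\mathbb{R}$.

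\textbf{Time derivative terms.} We have
\begin{equation*}
\int_{\mathbb{R}}u(u_t-u_{txx})\,dx=\frac12\frac{d}{dt}\int_{\mathbb{R}}(u^2+u_x^2)\,dx .
\end{equation*}

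\textbf{Dissipation term.} We have
\begin{equation*}
\lambda(t)\int_{\mathbb{R}}u(u-u_{xx})\,dx=\lambda(t)E(t).
\end{equation*}

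\textbf{Cubic term.} It satisfies $\int_{\mathbb{R}}3u^3u_x\,dx=\frac34\int_{\mathbb{R}}(u^4)_x\,dx=0$.

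\textbf{Right-hand side.} For $\int_{\mathbb{R}}u(uu_{xxx}+2u_xu_{xx})\,dx$, integrate by parts in the first piece:
\begin{equation*}
\int_{\mathbb{R}}u^2u_{xxx}\,dx=-2\int_{\mathbb{R}}uu_xu_{xx}\,dx .
\end{equation*}
This exactly cancels $\int_{\mathbb{R}}2uu_xu_{xx}\,dx$, so the right-hand side contributes zero.

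Combining these four contributions gives
\begin{equation*}
\frac12 E'(t)+\lambda(t)E(t)=0,
\end{equation*}
that is, $\frac{d}{dt}\bigl(e^{2\int_0^t\lambda(\tau)d\tau}E(t)\bigr)=0$. Here $\lambda$ is continuous, so the integrating factor is $C^1$. Integrating from $0$ to $t$ yields $E(t)=e^{-2\int_0^t\lambda(\tau)d\tau}E(0)$.

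The only delicate point is regularity. For $s$ merely greater than $\frac32$, the terms $u_{txx}$ and $u_{xxx}$ are not in $L^2$, so the pairing with $u$ is not a priori meaningful. This is why the argument goes through smooth data and the density/continuous-dependence step rather than computing directly at low regularity. An alternative that avoids third derivatives is to work with \eqref{203}: multiply it by $u$, and multiply its $x$-derivative by $u_x$. The nonlocal terms then combine, via the identity $\Lambda^{-2}(1-\partial_x^2)=I$, into the same local cancellations.
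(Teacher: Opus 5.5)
Your proof is correct and follows the paper's own argument: multiply \eqref{103} by $u$ (the paper uses $2u$), integrate by parts so that the cubic term and the right-hand side drop out, and solve the linear ODE $E'(t)=-2\lambda(t)E(t)$. The one difference is that you justify the computation through smooth data and continuous dependence, a regularity step the paper leaves implicit.
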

\begin{proof}
Multiplying both sides of \eqref{103} by $2u$, we have
\begin{equation}
2uu_{t}-2uu_{txx}+6u^{3}u_{x}+2\lambda(t)(u^{2}-uu_{xx})=2u^{2}u_{xxx}+4uu_{x}u_{xx}.
\label{205}
\end{equation}
Integrating \eqref{205} with respect to $x$ over $\mathbb{R}$, we get
\begin{equation}
\frac{d}{dt}\int_{\mathbb{R}}(u^{2}+u_{x}^{2})dx+2\lambda(t) \int_{\mathbb{R}}(u^{2}+u_{x}^{2})dx=0,
\label{206}
\end{equation}
where we used the relations
\begin{equation*}
\int_{\mathbb{R}}4uu_{x}u_{xx}dx=-2\int_{\mathbb{R}}u^{2}u_{xxx}dx
\ \ \text{and}\ \
\int_{\mathbb{R}}2 uu_{xx}dx=-2\int_{\mathbb{R}}u_{x}^{2}dx.
\end{equation*}
Integrating \eqref{206} with respect to $t$ over $(0,t)$, we obtain
\begin{equation}
\int_{\mathbb{R}}(u^{2}+u_{x}^{2})dx=e^{-2\int_{0}^{t}\lambda(\tau)d\tau} \int_{\mathbb{R}}(u_{0}^{2}+u_{0x}^{2})dx,
\label{207}
\end{equation}
i.e.,
$E(t)=e^{-2\int_{0}^{t}\lambda(\tau)d\tau} E(0)$.
This completes the proof of Lemma \ref{lem201}.
\end{proof}

In what follows, we present some lemmas which is crucial in the proof of wave breaking mechanism.

\begin{lemma}[\cite{Kato1988}]
\label{lem202}
If  $r>0$, then  $H^{r}(\mathbb{R})\cap L^{\infty}(\mathbb{R})$ is an algebra. Moreover
$$\|fg\|_{r}\leq c\left(\|f\|_{L^{\infty}(\mathbb{R})}\|g\|_{r}+\|f\|_{r}\|g\|_{L^{\infty}(\mathbb{R})}\right),$$
where $c$ is a constant depending only on $r$.
\end{lemma}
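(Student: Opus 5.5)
The plan is to prove the estimate through the Fourier characterization of $H^{r}$, using a Littlewood--Paley (dyadic) splitting of the product $fg$. This is the classical Kato--Ponce argument. Write $\|h\|_{r}\simeq\|h\|_{L^{2}}+\big(\sum_{j\ge0}2^{2jr}\|\Delta_{j}h\|_{L^{2}}^{2}\big)^{1/2}$, where $\Delta_j$ are the dyadic frequency blocks and $S_j=\sum_{k<j-1}\Delta_k$ are the low-frequency cut-offs. The $L^{2}$ part is immediate from $\|fg\|_{L^{2}}\le\|f\|_{L^{\infty}}\|g\|_{L^{2}}$. The remaining task is the homogeneous part for $r>0$.

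Using Bony's paraproduct decomposition, I will write
\[
fg=T_{f}g+T_{g}f+R(f,g),\qquad T_{f}g=\sum_{j}S_{j}f\,\Delta_{j}g,\qquad R(f,g)=\sum_{|j-k|\le1}\Delta_{j}f\,\Delta_{k}g.
\]
The two paraproducts are handled as follows.
\begin{itemize}
\item \textbf{Paraproduct $T_f g$.} Each term $S_{j}f\,\Delta_{j}g$ has Fourier support in an annulus of size $\sim2^{j}$. It satisfies $\|S_{j}f\,\Delta_{j}g\|_{L^{2}}\le C\|f\|_{L^{\infty}}\|\Delta_{j}g\|_{L^{2}}$, because $\|S_jf\|_{L^\infty}\le C\|f\|_{L^\infty}$ (its kernel is uniformly in $L^{1}$). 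Almost-orthogonality of the annuli then gives $\|T_{f}g\|_{r}\le C\|f\|_{L^{\infty}}\|g\|_{r}$.
\item \textbf{Paraproduct $T_g f$.} Symmetrically, $\|T_{g}f\|_{r}\le C\|g\|_{L^{\infty}}\|f\|_{r}$.
\end{itemize}

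The remainder $R(f,g)$ is where I expect the main difficulty, since its terms are only supported in balls $\{|\xi|\lesssim2^{j}\}$, not annuli. I will estimate
\[
\|\Delta_{q}R\|_{L^{2}}\le C\sum_{j\ge q-3}\|f\|_{L^{\infty}}\|\Delta_{j}g\|_{L^{2}},
\]
so that
\[
2^{qr}\|\Delta_{q}R\|_{L^{2}}\le C\|f\|_{L^{\infty}}\sum_{j\ge q-3}2^{(q-j)r}\,2^{jr}\|\Delta_{j}g\|_{L^{2}}.
\]
Since $r>0$, the kernel $2^{(q-j)r}\mathbf 1_{j\ge q-3}$ is summable. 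Young's inequality for sequences then yields $\|R\|_{r}\le C\|f\|_{L^{\infty}}\|g\|_{r}$. This is exactly where the hypothesis $r>0$ is used.

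Summing the three bounds gives the stated inequality, with $c$ depending only on $r$. This in particular shows that $H^{r}\cap L^{\infty}$ is closed under multiplication, i.e.\ it is an algebra.

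As an alternative, if the paper prefers to cite rather than reprove, the result can be taken verbatim from Kato--Ponce \cite{Kato1988}. Their proof instead uses the commutator estimate for $\Lambda^{r}$ together with the Coifman--Meyer multiplier theorem.
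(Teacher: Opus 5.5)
Your argument is correct, but it does not follow a route taken in the paper, because the paper gives no proof. Lemma \ref{lem202} is simply cited from Kato--Ponce \cite{Kato1988}. There the bound is obtained for Bessel potentials on $L^{p}$, $1<p<\infty$, from bilinear multiplier estimates of Coifman--Meyer type.

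Your Bony decomposition is the standard self-contained $L^{2}$ proof. The two paraproducts need only $\|S_{j}f\|_{L^{\infty}}\le C\|f\|_{L^{\infty}}$ and almost-orthogonality of dyadic annuli, which holds for every real $r$. The remainder is the only place where $r>0$ is used, through the summability of $2^{(q-j)r}\mathbf{1}_{j\ge q-3}$.

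Your route gives an elementary proof, since Plancherel replaces the multiplier theorem. It also makes the role of $r>0$ transparent and extends directly to Besov spaces $B^{r}_{p,q}$. The cited route gives the $L^{p}$-based version and, with the same machinery, the commutator estimate of Lemma \ref{lem203}.

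Three small points:
\begin{itemize}
\item In the remainder, the block on $g$ should be the fattened one, $\widetilde{\Delta}_{j}=\Delta_{j-1}+\Delta_{j}+\Delta_{j+1}$. This changes nothing in the argument.
\item To conclude that $H^{r}\cap L^{\infty}$ is an algebra, you also need the trivial bound $\|fg\|_{L^{\infty}}\le\|f\|_{L^{\infty}}\|g\|_{L^{\infty}}$.
\item Your closing description of Kato--Ponce should not suggest that the product bound follows from the commutator bound. Writing $\Lambda^{r}(fg)=[\Lambda^{r},f]g+f\Lambda^{r}g$ and applying Lemma \ref{lem203} produces a term with $\|\partial_{x}f\|_{L^{\infty}}$, which is not controlled by $\|f\|_{L^{\infty}}$.
\end{itemize}
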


\begin{lemma}[\cite{Kato1988}]
\label{lem203}
If $r>0$, then
$$\left\|\left[\Lambda^{r}, f\right] g\right\|_{L^{2}(\mathbb{R})} \leq c\left(\left\|\partial_{x} f\right\|_{L^{\infty}(\mathbb{R})}\left\|\Lambda^{r-1} g\right\|_{L^{2}(\mathbb{R})}+\left\|\Lambda^{r} f\right\|_{L^{2}(\mathbb{R})}\|g\|_{L^{\infty}(\mathbb{R})}\right),$$
where $c$ is a constant depending only on $r$.
\end{lemma}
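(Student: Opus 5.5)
This is the Kato--Ponce commutator estimate, which the paper cites from \cite{Kato1988}. A self-contained proof would go through Fourier analysis, using the Coifman--Meyer / Kato--Ponce paradifferential machinery, as follows.

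The first step is to write $[\Lambda^{r},f]g=\Lambda^{r}(fg)-f\Lambda^{r}g$ and apply Bony's paraproduct decomposition $fg=T_fg+T_gf+R(f,g)$, where $T_fg=\sum_j S_{j-1}f\,\Delta_jg$. The commutator then splits into three pieces:
\[
[\Lambda^r,T_f]g,\qquad \Lambda^r T_gf+\Lambda^rR(f,g),\qquad -(f\Lambda^r g-T_f\Lambda^r g).
\]

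For the first piece, $[\Lambda^r,T_f]g$, each dyadic block $[\Lambda^r,S_{j-1}f]\Delta_jg$ is localized at frequency $2^j$, while $S_{j-1}f$ has lower frequency. Expanding the symbol $\langle\xi\rangle^r$ to first order (mean value theorem in $\xi$) gives a kernel bound of the form
\[
\|[\Lambda^r,S_{j-1}f]\Delta_jg\|_{L^2}\lesssim \|\partial_xS_{j-1}f\|_{L^\infty}\,2^{j(r-1)}\|\Delta_jg\|_{L^2}.
\]
Almost-orthogonality then yields the bound $\|\partial_xf\|_{L^\infty}\|\Lambda^{r-1}g\|_{L^2}$.

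The second piece is controlled by $\|\Lambda^rf\|_{L^2}\|g\|_{L^\infty}$. This is the standard paraproduct estimate for $T_gf$, and for $R(f,g)$ it uses $r>0$ to sum the high--high interactions.

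The third piece is $f\Lambda^rg-T_f\Lambda^rg=T_{\Lambda^rg}f+R(f,\Lambda^rg)$. Here the derivative must be moved onto $f$. Since the frequencies of $f$ dominate those of $g$ in this term, we can write $\Delta_kf\,S_{k-1}\Lambda^rg$ and bound it by $2^{kr}\|\Delta_kf\|_{L^2}\,\|S_{k-1}\Lambda^r g\|_{L^\infty}2^{-kr}$, which yields $\|\Lambda^rf\|_{L^2}\|g\|_{L^\infty}$. One case needs care: when $r\ge1$, part of this term must instead be bounded by $\|\partial_xf\|_{L^\infty}\|\Lambda^{r-1}g\|_{L^2}$, using Bernstein's inequality.

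I expect the main obstacle to be this low--high bookkeeping, in particular checking that the constants depend only on $r$. In the paper it suffices simply to quote \cite{Kato1988}, which is what the authors do.
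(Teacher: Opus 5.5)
The paper gives no argument for this lemma; it only quotes Kato--Ponce, as you note. As a self-contained proof, however, your sketch has a genuine gap in the third piece.

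You split $f\Lambda^r g-T_f\Lambda^r g=T_{\Lambda^r g}f+R(f,\Lambda^r g)$ and then argue as if ``the frequencies of $f$ dominate''. That is true for $T_{\Lambda^r g}f$, and your bound handles that term. In fact $\|S_{k-1}\Lambda^r g\|_{L^\infty}\lesssim 2^{kr}\|g\|_{L^\infty}$ holds for every $r>0$, so no case distinction at $r\ge 1$ is needed there. It is false for the diagonal remainder $R(f,\Lambda^r g)=\sum_k\Delta_k f\,\tilde\Delta_k\Lambda^r g$, where the two frequencies are comparable.

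\begin{itemize}
\item Each block obeys $\|\Delta_k f\,\tilde\Delta_k\Lambda^r g\|_{L^2}\lesssim 2^{kr}\|\Delta_k f\|_{L^2}\|g\|_{L^\infty}$. But its Fourier support is a ball of radius $\sim 2^k$, not an annulus, so almost orthogonality is unavailable.
\item Unlike $\Lambda^r R(f,g)$, there is no factor $2^{(q-k)r}$ to restore $\ell^2$-summability, because $\Lambda^r$ now acts on the input $g$ and not on the output.
\item The triangle inequality therefore only yields $\|g\|_{L^\infty}\sum_k 2^{kr}\|\Delta_k f\|_{L^2}$. This is a $B^r_{2,1}$ norm, which $\|\Lambda^r f\|_{L^2}$ does not control.
\item Moving the derivative by Bernstein, $\|\Delta_k f\|_{L^\infty}\lesssim 2^{-k}\|\partial_x f\|_{L^\infty}$, gives $\|\partial_x f\|_{L^\infty}\sum_k 2^{k(r-1)}\|\tilde\Delta_k g\|_{L^2}$. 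This suffers the same $\ell^1$-versus-$\ell^2$ loss.
\end{itemize}

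The missing ingredient is exactly the deep step in Kato--Ponce. Write
\[
\Delta_k f\,\tilde\Delta_k\Lambda^r g=\bigl(2^{kr}\Lambda^{-r}\Delta_k\bigr)\Lambda^r f\cdot\bigl(2^{-kr}\Lambda^r\tilde\Delta_k\bigr)g,
\]
where both operators are uniformly of Littlewood--Paley type at scale $2^k$. You then need the high--high paraproduct bound
\[
\bigl\|\textstyle\sum_k\Delta_k' F\,\Delta_k'' G\bigr\|_{L^2}\lesssim\|F\|_{L^2}\|G\|_{L^\infty}.
\]
This bound holds, but it comes from the Coifman--Meyer bilinear multiplier theorem, which is what Kato and Ponce invoke. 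Alternatively, by duality it follows from the Carleson embedding
\[
\textstyle\sum_k\int|\Delta_k'' G|^2|S_{k+N}H|^2\,dx\lesssim\|G\|_{\mathrm{BMO}}^2\|H\|_{L^2}^2.
\]
It cannot be obtained from dyadic block estimates plus orthogonality. Once that estimate is inserted, the rest of your decomposition goes through: $[\Lambda^r,T_f]g$ via the first-order kernel expansion, $\Lambda^r T_g f$, and $\Lambda^r R(f,g)$ using $r>0$.
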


\begin{lemma}[\cite{Constantin2002}]
\label{lem204}
If $F\in C^{\infty}(\mathbb{R})$ with $F(0)=0$. Then for any $r>1/2$, we have
$$\|F(u)\|_{r}\leq \widetilde{F}(\|u\|_{L^{\infty}})\|u\|_{r},\ u\in H^{r}(\mathbb{R}),$$
where $\widetilde{F}$ is a monotone increasing function depending only on $F$ and $r$.
\end{lemma}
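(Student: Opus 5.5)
Since Lemma \ref{lem204} is quoted from \cite{Constantin2002}, the plan is to reprove it by the classical Moser/Meyer paralinearization argument. Two tools are used throughout: the Littlewood--Paley decomposition $u=\sum_{j\geq -1}\Delta_j u$ with partial sums $S_j u=\sum_{k<j}\Delta_k u$, and the characterization $\|f\|_r^2\simeq\sum_j 2^{2jr}\|\Delta_j f\|_{L^2}^2$. Because $r>1/2$, we have $H^r\subset L^\infty$, so $\|u\|_{L^\infty}$ is finite. We also have $\|S_j u\|_{L^\infty}\leq C\|u\|_{L^\infty}$ uniformly in $j$, since $S_j$ is convolution with an $L^1$-bounded kernel. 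Hence all arguments of $F$ and its derivatives below lie in a fixed compact interval $[-C\|u\|_{L^\infty},C\|u\|_{L^\infty}]$. Every constant we produce will be a sup of finitely many $|F^{(k)}|$ over this interval, and such a sup is automatically a monotone increasing function of $\|u\|_{L^\infty}$. This is how $\widetilde F$ will be obtained.

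\textbf{Step 1 (telescoping).} Since $F(0)=0$ and $S_j u\to u$, write
$$F(u)=F(S_0u)+\sum_{j\geq 0}\bigl(F(S_{j+1}u)-F(S_ju)\bigr)=F(S_0u)+\sum_{j\geq0}m_j\,\Delta_j u,$$
where $m_j=\int_0^1F'(S_ju+\theta\Delta_ju)\,d\theta$.
\begin{itemize}
\item For the first term, $S_0u$ is frequency localized. Hence $\|\partial_x^kS_0u\|_{L^2}\leq C\|u\|_{L^2}$, and $|F(v)|\leq C|v|$ on the compact interval. The chain rule then gives $\|F(S_0u)\|_{H^N}\leq \widetilde F(\|u\|_{L^\infty})\|u\|_{L^2}$ for any integer $N\geq r$.
\item For the coefficients $m_j$, Bernstein's inequality gives $\|\partial_x^\alpha S_ju\|_{L^\infty}\leq C2^{j\alpha}\|u\|_{L^\infty}$. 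Combined with the Fa\`a di Bruno formula, this yields the symbol-type bound
$$\|\partial_x^\alpha m_j\|_{L^\infty}\leq C_\alpha(\|u\|_{L^\infty})\,2^{j\alpha}\qquad\text{for } 0\leq\alpha\leq N.$$
\end{itemize}

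\textbf{Step 2 (summing the series).} This is the main obstacle. We must show that a series $\sum_j m_j\Delta_ju$ whose coefficients satisfy the bound above belongs to $H^r$, with norm at most $C\sup_{\alpha\leq N}C_\alpha\,\|u\|_r$, for every $r>0$ and with $N>r$.

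The difficulty is that the naive route fails. Writing $\|F(u)\|_r\simeq\|F(u)\|_{L^2}+\|F'(u)u_x\|_{r-1}$ and applying Lemma \ref{lem202} produces $\|u_x\|_{L^\infty}$, which is not controlled by the right-hand side.

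The plan is to estimate $\Delta_k\bigl(\sum_j m_j\Delta_ju\bigr)$ by splitting into two ranges.
\begin{itemize}
\item For $j\geq k-2$, use only $\|m_j\|_{L^\infty}$. This gives $2^{kr}\|\Delta_k(\cdot)\|_{L^2}\lesssim\sum_{j\geq k-2}2^{(k-j)r}\,2^{jr}\|\Delta_ju\|_{L^2}$, and this convolution in $j$ is summable because $r>0$.
\item For $j<k-2$, each product $m_j\Delta_ju$ is not frequency localized. Here use $\|\Delta_k g\|_{L^2}\leq C2^{-kN}\|\partial_x^Ng\|_{L^2}$ together with the Leibniz rule and the bounds on $\partial_x^\alpha m_j$ and $\partial_x^\beta\Delta_ju$. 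This gives $2^{kr}\|\Delta_k(\cdot)\|_{L^2}\lesssim\sum_{j<k}2^{(k-j)(r-N)}2^{jr}\|\Delta_ju\|_{L^2}$, which is summable because $N>r$.
\end{itemize}
Young's inequality for sequences then yields the $\ell^2$ bound on $\bigl(2^{kr}\|\Delta_k(\cdot)\|_{L^2}\bigr)_k$, and hence the $H^r$ bound.

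\textbf{Step 3 (assembly and a consistency check).} Adding the two contributions gives $\|F(u)\|_r\leq\widetilde F(\|u\|_{L^\infty})\|u\|_r$, where $\widetilde F$ is built from $\sup_{|s|\leq C\|u\|_{L^\infty}}|F^{(k)}(s)|$ for $k\leq N+1$ and is therefore increasing.

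As a sanity check in the range $1/2<r<1$, the result also follows directly from the Gagliardo (difference-quotient) norm
$$\|f\|_r^2\simeq\|f\|_{L^2}^2+\iint\frac{|f(x+h)-f(x)|^2}{|h|^{1+2r}}\,dx\,dh.$$
Indeed, $|F(u(x+h))-F(u(x))|\leq\sup_{|s|\leq\|u\|_{L^\infty}}|F'(s)|\,|u(x+h)-u(x)|$ and $|F(u)|\leq\sup|F'|\,|u|$. This confirms the form of $\widetilde F$ in that range.
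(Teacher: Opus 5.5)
Your argument is correct. There is no paper proof to compare it with: Lemma~\ref{lem204} is simply imported from \cite{Constantin2002}. You instead give a self-contained proof by the classical Meyer-type telescoping argument. You write $F(S_{j+1}u)-F(S_ju)=m_j\Delta_ju$, prove the symbol bounds $\|\partial_x^\alpha m_j\|_{L^\infty}\leq C_\alpha 2^{j\alpha}$, estimate $\Delta_k$ of the series separately for $j\geq k-2$ and $j<k-2$, and finish with Young's inequality in $\ell^2$.

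The one step you leave implicit is passing $\Delta_k$ through the infinite sum, and it is harmless. Since $\|m_j\Delta_ju\|_{L^2}\leq C\|\Delta_ju\|_{L^2}$ and, by Cauchy--Schwarz, $\sum_j\|\Delta_ju\|_{L^2}\leq C_r\|u\|_r$ for $r>0$, the series converges absolutely in $L^2$. Its partial sums $F(S_Ju)-F(S_0u)$ tend to $F(u)-F(S_0u)$, because $S_Ju\to u$ in $L^2$ while $\|S_Ju\|_{L^\infty}\leq C\|u\|_{L^\infty}$.

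What your route buys over the citation is a sharper statement. The estimate holds for every $r>0$ whenever $u\in H^r\cap L^\infty$; the hypothesis $r>1/2$ only serves to make $\|u\|_{L^\infty}$ finite. It needs only $F\in C^{N+1}$ for an integer $N>r$. It also exhibits $\widetilde F$ explicitly, through $\sup_{|s|\leq C\|u\|_{L^\infty}}|F^{(k)}(s)|$ for $k\leq N+1$ and powers of $\|u\|_{L^\infty}$.

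Conversely, none of this machinery is needed where the paper actually uses the lemma. Those uses are $F=h$ with $h(u)=u^{3}-\frac{3}{2}u^{2}$, in \eqref{2010} and in the proof of Theorem~\ref{the204}. There, iterating Lemma~\ref{lem202} gives $\|u^2\|_r\leq 2c\|u\|_{L^\infty}\|u\|_r$ and $\|u^3\|_r\leq c(2c+1)\|u\|_{L^\infty}^2\|u\|_r$. So one may take $\widetilde h(a)=c(2c+1)a^2+3ca$.
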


\begin{theorem}
\label{the203} Let $u_{0}(x)\in H^{s} (s>3/2)$ be given and
$T$ be the maximal existence time of the solution $u(t,x)$ of equation \eqref{103} with the initial datum $u_{0}(x)$.
If there exists a positive constant $M>0$ such that
\begin{equation*}
\limsup_{t\rightarrow T}\sup_{x\in\mathbb{R}}\|{u_{x}(t,x)}\|_{L^{\infty}}\leq M,
\end{equation*}
then the $H^{s}$-norm of $u(t,\cdot)$ does not blow up on $[0,T)$.
\end{theorem}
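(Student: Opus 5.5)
The plan is the standard Kato-type energy estimate in $H^{s}$ for the nonlocal form \eqref{203}, closed by Gronwall's inequality. I will first work with smooth solutions. By Theorem \ref{the202a} and the continuous dependence in Theorem \ref{the202}, it suffices to take $u_{0}\in H^{\infty}$ and then pass to general $H^{s}$ data by density. The first step is an $L^{\infty}$ bound on $u$ itself. By Lemma \ref{lem201} and the embedding $H^{1}\hookrightarrow L^{\infty}$, we have $\|u\|_{L^{\infty}}^{2}\leq \frac12 E(t)\leq \frac12 e^{2\int_{0}^{T}|\lambda|d\tau}E(0)$. Since $\lambda$ is continuous, this is bounded on $[0,T)$ when $T<\infty$, which is the only case of interest. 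Together with the hypothesis on $u_x$, we get a constant $K$ such that $\|u\|_{L^\infty}+\|u_x\|_{L^\infty}\le K$ on $[0,T)$.

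Next, apply $\Lambda^{s}$ to \eqref{203}, multiply by $\Lambda^{s}u$ and integrate. The damping term contributes $-\lambda(t)\|u\|_{s}^{2}$, which is harmless since $|\lambda|$ is bounded on $[0,T]$. For the transport term, write
\[
\int \Lambda^{s}(uu_{x})\Lambda^{s}u\,dx=\int [\Lambda^{s},u]u_{x}\,\Lambda^{s}u\,dx+\int u\,\Lambda^{s}u_{x}\,\Lambda^{s}u\,dx .
\]
By Lemma \ref{lem203} with $f=u$ and $g=u_x$, the commutator is bounded by $c\|u_{x}\|_{L^\infty}\|u\|_{s}^{2}$. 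The last integral equals $-\frac12\int u_{x}(\Lambda^{s}u)^{2}dx$ after integration by parts, so it is bounded by $\frac12\|u_x\|_{L^\infty}\|u\|_s^2$.

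For the nonlocal term, use that $\partial_{x}\Lambda^{-2}$ maps $H^{s-1}$ boundedly into $H^{s}$. It then suffices to bound $\|u^{2}+\frac12u_x^{2}+u^{3}-\frac32u^{2}\|_{s-1}$. Lemma \ref{lem202} gives $\|u_x^{2}\|_{s-1}\le c\|u_x\|_{L^\infty}\|u_x\|_{s-1}\le cK\|u\|_{s}$. Lemma \ref{lem204}, applied to the polynomials $u^2$ and $u^3$ with $F(0)=0$, gives bounds of the form $\widetilde F(K)\|u\|_{s-1}\le \widetilde F(K)\|u\|_{s}$; here we need $s-1>1/2$, which holds. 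Combining, we obtain
\[
\frac{d}{dt}\|u\|_{s}^{2}\le C(K,\sup_{[0,T]}|\lambda|)\,\|u\|_{s}^{2}.
\]
Gronwall's inequality then gives $\|u(t)\|_{s}^{2}\le e^{CT}\|u_{0}\|_{s}^{2}$ on $[0,T)$, so the $H^s$-norm does not blow up.

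The main obstacle is rigor rather than the estimates themselves: justifying the pairing $\langle\Lambda^{s}(uu_x),\Lambda^{s}u\rangle$ when $u$ is only in $H^{s}$. I would handle this by proving the inequality for $H^{\infty}$ solutions, whose existence time is unchanged by Theorem \ref{the202a}, and then extending it to $H^{s}$ data using the continuity of the data-to-solution map from Theorem \ref{the202}. A secondary point is keeping the Gronwall constant uniform on $[0,T)$. This requires the $L^{\infty}$ bound on $u$, which is where Lemma \ref{lem201} and the boundedness of $\int_0^T|\lambda|$ enter.
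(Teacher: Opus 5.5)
Your proposal is correct and follows the paper's argument essentially step for step. Both use the $\Lambda^s$ energy estimate on \eqref{203}, the commutator bound of Lemma \ref{lem203} for the transport term, Lemmas \ref{lem202} and \ref{lem204} for the nonlocal term, and then Gronwall. Your bound $\|u\|_{L^\infty}^2\le\frac12 e^{2\int_0^T|\lambda|\,d\tau}E(0)$ is a bit more careful than the paper's use of $\sqrt{E(0)}$, which tacitly assumes $\lambda\ge 0$, and your regularization remark covers a justification the paper leaves implicit.
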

\begin{proof}
Let $u(t,x)$ be the unique solution of equation \eqref{103} with the initial datum $u_{0}(x)\in H^{s} (s>3/2)$, which is guaranteed by Theorem \ref{the201}.

Applying the operator $\Lambda^{s}$ to equation \eqref{203}, multiplying by $2\Lambda^{s} u$ and integrating the resulting equation by parts on $\mathbb{R}$, we have
\begin{equation}
\frac{d}{dt}(u,u)_{s}=-2(uu_{x},u)_{s}-2(f(u),u)_{s}-2\lambda(t)(u,u)_{s},
\label{208}
\end{equation}
where $f(u)=\partial_{x}\Lambda^{-2}\left(u^{2}+\frac{1}{2}u_{x}^{2}+h(u)\right).$

In what follows, we estimate the right side of equality \eqref{208}. Assume there exists a positive constant $M>0$ such that
$$
\limsup_{t\rightarrow T}\sup_{x\in\mathbb{R}}\|{u_{x}(t,x)}\|_{L^{\infty}}\leq M,
$$
then we have
\begin{align}
|(uu_{x},u)_{s}|
=&|(\Lambda^{s}(uu_{x}),\Lambda^{s}u)_{L^{2}}|\nonumber\\
=&\left|\left([\Lambda^{s},u]u_{x},\Lambda^{s}u\right)_{L^{2}}
+\left(u\Lambda^{s}u_{x},\Lambda^{s}u\right)_{L^{2}}\right|\nonumber\\
\leq&\left\|[\Lambda^{s},u]u_{x}\right\|_{L^{2}}\|\Lambda^{s}u\|_{L^{2}}
+\frac{1}{2}\left\|u_{x}\right\|_{L^{\infty}}\|\Lambda^{s}u\|^{2}_{L^{2}}\nonumber\\
\leq& c_{1}\left(\|u_{x}\|_{L^{\infty}}\|\Lambda^{s-1}u_{x}\|_{L^{2}}
+\|\Lambda^{s}u\|_{L^{2}}\|u_{x}\|_{L^{\infty}}\right)\|u\|_{s}+\frac{1}{2}\left\|u_{x}\right\|_{L^{\infty}}\|u\|_{s}^{2}\nonumber\\
\leq& c_{1}\left(\|u_{x}\|_{L^{\infty}}\|u\|_{s}
+\|u_{x}\|_{L^{\infty}}\|u\|_{s}\right)\|u\|_{s}+\frac{1}{2}M\|u\|_{s}^{2}\nonumber\\
\leq&\left(2c_{1}+\frac{1}{2}\right)M\|u\|_{s}^{2},
\label{209}
\end{align}
where we used Lemma \ref{lem203} with $r=s$.
Similarly, note that $H^{s}(s>1/2)$ is a Banach algebra, it follows that
\begin{align}
&(f(u),u)_{s}\nonumber\\
=&\left(\partial_{x}\Lambda^{-2}\left(u^{2}+\frac{1}{2}u_{x}^{2}+h(u)\right),u\right)_{s}\nonumber\\
\leq& \|u\|_{s}\left\|\partial_{x}\Lambda^{-2}\left(u^{2}+\frac{1}{2}u_{x}^{2}+h(u)\right)\right\|_{s}
\nonumber\\
\leq& \|u\|_{s}\left(\left\|u^{2}\right\|_{s-1}+\left\|u_{x}^{2}\right\|_{s-1}+\left\|h(u)\right\|_{s-1}
\right)\nonumber\\
\leq& \|u\|_{s}\bigg(2c_{2}\|u\|_{L^{\infty}}\|u\|_{s-1}+2c_{3}\|u_{x}\|_{L^{\infty}}\|u_{x}\|_{s-1}
+\widetilde{h}(\|u\|_{L^{\infty}})\left\|u\right\|_{s-1}
\bigg)\nonumber\\
\leq&c_{4}\|u\|_{s}^{2},
\label{2010}
\end{align}
where $c_{4}=2c_{2}\sqrt{E(0)}+2c_{3}M+\widetilde{h}(\sqrt{E(0)}).$
Here we used Lemmas \ref{lem202} and \ref{lem204} with $r=s$.

Substituting \eqref{209} and \eqref{2010} into \eqref{208}, we get
\begin{equation}
\frac{d}{dt}\|u\|_{s}^{2}\leq \left(c_{5}-2\lambda(t)\right)\|u\|_{s}^{2},
\label{2011}
\end{equation}
where $c_{5}=\left(4c_{1}+1\right)M+2c_{4}.$
By using Gronwall's inequality, we have
$$\|u\|_{s}^{2}\leq e^{c_{5}t-2\int_{0}^{t}\lambda(\tau)d\tau}\|u_{0}\|_{s}^{2}.$$
This completes the proof of Theorem \ref{the203}.
\end{proof}

%Below, we will establish a relation between the behavior of $u_{x}$ and wave breaking phenomena of solutions to problem \eqref{103}, and then give a sufficient condition on the initial data to guarantee the occurrence of wave breaking.
%In the following proof, we only prove the case of $r=3$, since we can obtain the same conclusion for general case $r>\frac{3}{2}$ by using denseness.
\begin{theorem}
\label{the204} Let $u_{0}(x)\in H^{s}(s>\frac{3}{2})$ be given, and
$T$ be the maximal existence time of the corresponding solution $u(t,x)$ to problem \eqref{103}.
Then $T$ is finite if and only if
\begin{equation*}
\liminf_{t\rightarrow T}\inf_{x\in\mathbb{R}}{u_{x}(t,x)}=-\infty.
\end{equation*}
\end{theorem}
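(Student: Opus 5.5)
The plan is to prove a one-sided version of Theorem \ref{the203}: an upper bound on $u_x$ is not needed, only a lower bound. By Theorem \ref{the202a} and a density argument it suffices to treat $s=3$, where all the integrations by parts are justified. The ``if'' direction is immediate. If $\liminf_{t\to T}\inf_x u_x=-\infty$ with $T=\infty$, nothing needs proving about finiteness in the usual reading of wave breaking. More to the point, if $T<\infty$ is assumed, the content is the ``only if'' direction. So I will argue by contradiction: suppose $T<\infty$ but $u_x(t,x)\ge -K$ on $[0,T)\times\mathbb R$ for some $K>0$. The goal is to show that $\|u(t)\|_{3}$, or equivalently $\|y\|_{L^2}+\|y_x\|_{L^2}$ with $y=u-u_{xx}$, stays bounded on $[0,T)$. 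This contradicts the maximality of $T$ in Theorem \ref{the202}.

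\textbf{Step 1: bounds on $u$ and the upper side of $u_x$.}
Lemma \ref{lem201} together with $H^1\hookrightarrow L^\infty$ gives
\[
\|u\|_{L^\infty}\le \tfrac{1}{\sqrt2}\,e^{-\int_0^t\lambda}\sqrt{E(0)}\le C,
\]
since $\lambda$ is continuous on $[0,T]$.

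The upper bound on $u_x$ comes from differentiating \eqref{204} in $x$ and using $\partial_x^2P\ast f=P\ast f-f$:
\[
u_{tx}+uu_{xx}+\tfrac12u_x^2+\lambda u_x = u^2+h(u)-P\ast\Big(u^2+\tfrac12u_x^2+h(u)\Big).
\]
Set $m(t)=\sup_x u_x(t,x)$, handled through the characteristics $q_t=u(t,q)$ as in Constantin's argument. The term $-P\ast(\tfrac12u_x^2)$ is nonpositive, and the remaining terms on the right are bounded by $C(1+E(0))$ because $\|P\ast f\|_{L^\infty}\le\frac12\|f\|_{L^1}$. Hence
\[
m'\le -\tfrac12m^2+|\lambda|\,|m|+C,
\]
which keeps $m$ bounded on $[0,T)$. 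This yields $\|u_x\|_{L^\infty}\le M$ on $[0,T)$.

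\textbf{Step 2: the energy estimate.}
Once $\|u_x\|_{L^\infty}$ is bounded, Theorem \ref{the203} applies directly and shows that $\|u\|_s$ stays bounded. Alternatively, one can estimate \eqref{202} directly: multiply by $y$ and by $y_{xx}$ after differentiation, integrate, and note that every term is controlled by $(1+\|u_x\|_{L^\infty}+\|u\|_{L^\infty}^2)(\|y\|^2_{L^2}+\|y_x\|^2_{L^2})$. Gronwall then gives boundedness. Either route contradicts the blow-up alternative coming from Kato's theorem (Theorem \ref{the201}), namely that the $H^s$ norm must blow up as $t\to T<\infty$.

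\textbf{Main obstacle.}
The hardest point is Step 1, specifically the one-sided control of $\sup_x u_x$. Two things need care there. First, the sup must be justified as attained and differentiable almost everywhere; for this I would invoke the standard lemma of Constantin--Escher on $\frac{d}{dt}\sup_x u_x$ for $u\in C([0,T);H^3)\cap C^1([0,T);H^2)$. Second, the cubic term $h(u)$ must be bounded using only the $L^\infty$ and $L^2$ bounds supplied by Lemma \ref{lem201}; the worst case is $\|u^3\|_{L^1}\le\|u\|_{L^\infty}\|u\|_{L^2}^2$.

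The time-dependent $\lambda$ causes no difficulty on the compact interval $[0,T]$.
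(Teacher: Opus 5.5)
\textbf{Verdict.} Your argument for the substantive direction ($T<\infty\Rightarrow\liminf_{t\to T}\inf_x u_x=-\infty$) is correct, but it is organized differently from the paper's.

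\textbf{The paper's route.} The paper never looks at $\sup_x u_x$. It works at $s=2$, multiplies \eqref{202} by $2y$, and uses $2\int uyy_x\,dx=-\int u_xy^2\,dx$ to get $\frac{d}{dt}\|y\|_{L^2}^2=-\int(3u_xy^2+2y\,\partial_xh(u)+2\lambda y^2)\,dx$. Here $u_x$ enters only linearly against $y^2\ge 0$, so the lower bound $u_x>-N$ alone closes a Gronwall estimate for $\|y\|_{L^2}\simeq\|u\|_2$. Then $H^2\hookrightarrow W^{1,\infty}$ bounds $\|u_x\|_{L^\infty}$, and Theorem \ref{the203} finishes.

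\textbf{Your route.} You instead show that $\sup_x u_x$ stays bounded on any bounded time interval with no hypothesis at all. This comes from the Riccati inequality at the maximum point, where the favourable sign of $-\frac12P\ast u_x^2$ is what makes it work. You then feed the resulting two-sided bound into Theorem \ref{the203}.

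\textbf{Comparison.} Your approach makes explicit the structural reason why only $-\infty$ can occur. It also handles sign-changing $\lambda$ cleanly through $E(t)=e^{-2\int_0^t\lambda}E(0)$ and the boundedness of $\lambda$ on $[0,T]$. The price is the pointwise machinery of Lemma \ref{lem205}, applied to $-u$; this is why you need $s=3$, so that $u\in C^1([0,T);H^2)$ and $u_{xx}(t,\eta(t))=0$. It also relies on the sign of the nonlocal term. The paper's integral estimate needs only integration by parts at the $H^2$ level. In the $y$-formulation the nonlocal term does not even appear. Your alternative $H^3$ energy estimate in Step 2 is unnecessary once Theorem \ref{the203} is available.

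\textbf{A caveat shared with the paper.} Your one-line dismissal of the ``if'' direction does not prove the literal statement. If $T=\infty$, nothing in your argument rules out $\inf_x u_x\to-\infty$ as $t\to\infty$, and this is a real possibility when $\lambda$ is allowed to be negative. The paper's sentence invoking Theorem \ref{the203} and Sobolev embedding has exactly the same defect. What both arguments actually establish is the ``only if'' direction. The converse has content only when $T$ is already known to be finite.
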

\begin{proof}
Since the maximal existence time $T$ is independent of the choice of $s$ in view of Theorem \ref{the202a}, we only need to consider the case $s=2$ by using a simple density argument.

Note that $y=u-u_{xx}$, then we have
\begin{equation*}
\|y\|_{L^{2}}^{2}=\int_{\mathbb{R}}(u-u_{xx})^{2}dx=\int_{\mathbb{R}}(u^{2}+u_{xx}^{2}+2u_{x}^{2})dx,
\end{equation*}
%and
%\begin{equation*}
%\|y_{x}\|_{L^{2}}^{2}=\int_{\mathbb{S}}(u_{x}-u_{xxx})^{2}dx
%=\int_{\mathbb{S}}(u_{x}^{2}+u_{xxx}^{2}+2u_{xx}^{2})dx,
%\end{equation*}
where we used the fact that
$
\int_{\mathbb{R}}2uu_{xx}dx
=-2\int_{\mathbb{R}}u_{x}^{2}dx.
%\ \ \text{and}\ \
%\int_{\mathbb{S}}2u_{x}u_{xxx}dx
%=-2\int_{\mathbb{S}}u_{xx}^{2}dx.
$
Therefore, we can conclude that
\begin{equation}
\|u\|_{2}^{2}\leq\|y\|_{L^{2}}^{2}\leq 2\|u\|_{2}^{2}.
\label{2012}
\end{equation}

Multiplying $2y$ to both sides of equation \eqref{202}, we get
\begin{equation}
2yy_{t}+4y^{2} u_{x}+2yy_{x}u+2y\partial_{x}h(u)+2\lambda(t) y^{2}=0.
\label{2013}
\end{equation}
Integrating \eqref{2013} with respect to\ $x$ over $\mathbb{R}$, we obtain
\begin{align}
\frac{d}{dt}\int_{\mathbb{R}}y^{2}dx&=-\int_{\mathbb{R}}(4y^{2} u_{x}+2yy_{x}u+2y\partial_{x}h(u)+2\lambda(t) y^{2})dx\nonumber\\
&=-\int_{\mathbb{R}}(3y^{2} u_{x}+2y\partial_{x}h(u)+2\lambda(t) y^{2})dx,
\label{2014}
\end{align}
where we used the relation
$
2\int_{\mathbb{R}}yy_{x}udx=-\int_{\mathbb{R}}y^{2}u_{x}dx.
$

Note that
$$2\int_{\mathbb{R}}y\partial_{x}h(u)dx\leq \int_{\mathbb{R}}(y^{2}+(\partial_{x}h(u))^{2})dx$$
and
$$\int_{\mathbb{R}}(\partial_{x}h(u))^{2}dx\leq \|h(u)\|_{1}^{2}\leq c_{4}\|u\|_{1}^{2}\leq c_{4}\|y\|_{L^{2}}^{2},$$
where $c_{4}>0$ represents a specific constant.

If $u_{x}(t,x)$ is bounded from below on $[0,T)\times \mathbb{R}$, i.e., there exists $N>0$ such that $u_{x}(t,x)>-N$ on $[0,T)\times \mathbb{R}$. Then we get
\begin{equation*}
\frac{d}{dt}\int_{\mathbb{R}}y^{2}dx\leq\left(3N+1-2\lambda(t)+c_{4}\right)\|y\|_{L^{2}}^{2},
\end{equation*}
By using Gronwall's inequality, we get
\begin{equation*}
\|y\|_{L^{2}}^{2}\leq e^{(3N+1+c_{4})t-2\int_{0}^{t}\lambda(\tau)d\tau}\|y_{0}\|_{L^{2}}^{2}.
\end{equation*}
Thus, by the Sobolev embedding theorem, it follows that
\begin{equation*}
\|u_{x}\|_{L^{\infty}}\leq \|u\|_{2}\leq \|y\|_{L^{2}}^{2}\leq e^{(3N+1+c_{4})t-2\int_{0}^{t}\lambda(\tau)d\tau}\|y_{0}\|_{L^{2}}^{2}.
\end{equation*}
By applying Theorem \ref{the203}, we deduce that the solution exists globally in time.

On the other hand, if $u_{x}(t,x)$ is unbounded from below, by applying Theorem \ref{the203} and using the Sobolev embedding theorem, we infer that the solution will blow up in finite time.

This completes the proof of Theorem \ref{the204}.
\end{proof}

In what follows, we give the first blowup criterion. To this end, we need the following lemma.
\begin{lemma}[\cite{Constantin1998}]
\label{lem205} Let $T>0$ and $v(t,x)\in C^{1}([0,T);H^{2}(\mathbb{R}))$ be a given function. Then, for any $t\in[0,T)$, there exists at least one point $\xi(t)\in\mathbb{R}$ such that
\begin{equation*}
m(t)=\inf_{x\in \mathbb{R}}v_{x}(t,x)=v_{x}(t,\xi(t)),
\end{equation*}
and the function $m(t)$ is almost everywhere differentiable in $[0,T)$, with
\begin{equation*}
m'(t)=v_{tx}(t,\xi(t)),\ \ a.e.\ on\ [0,T).
\end{equation*}
\end{lemma}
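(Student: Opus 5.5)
We follow the classical argument of Constantin and Escher. First reduce to smooth data: by the density of $C^{2}$-in-time, $H^{3}$-in-space solutions and the continuous dependence in Theorem \ref{the202}, together with Theorem \ref{the202a}, it suffices to prove the lemma when $v\in C^{1}([0,T);H^{2})$ is the generic smooth function of the statement. We may also fix $t$ and work on a compact interval $[0,T']\subset[0,T)$. Since $v(t,\cdot)\in H^{2}$, the function $v_{x}(t,\cdot)$ lies in $H^{1}\subset C_{0}(\mathbb{R})$, so it is continuous and tends to $0$ at $\pm\infty$.

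\emph{Step 1: existence of the minimum point.} If $v_{x}(t,\cdot)\equiv 0$, any $\xi(t)$ works. Otherwise, either $\inf_{x}v_{x}(t,x)<0$, and the decay at infinity forces the infimum to be attained on a compact set, giving a minimiser $\xi(t)$; or $\inf_{x}v_{x}=0$, and this value is attained as well. The reason is that $v_{x}(t,\cdot)\geq0$ with integral $0$ over $\mathbb{R}$ (as $v\in H^{2}$, $v\to0$ at infinity), which forces $v_{x}\equiv0$. So in every case $m(t)=v_{x}(t,\xi(t))$.

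\emph{Step 2: Lipschitz regularity of $m$.} Because $v\in C^{1}([0,T');H^{2})$ and $H^{1}\hookrightarrow L^{\infty}$, the map $t\mapsto v_{x}(t,\cdot)\in L^{\infty}$ is $C^{1}$. Hence
\[
|m(t)-m(s)|\leq\|v_{x}(t)-v_{x}(s)\|_{L^{\infty}}\leq K|t-s|,
\]
with $K=\sup_{[0,T']}\|v_{t}\|_{2}$. Thus $m$ is Lipschitz, and by Rademacher's theorem it is differentiable almost everywhere.

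\emph{Step 3: identification of $m'$.} Let $t$ be a point of differentiability. For $h>0$, minimality gives
\[
m(t+h)-m(t)\leq v_{x}(t+h,\xi(t))-v_{x}(t,\xi(t)).
\]
Dividing by $h$ and letting $h\to0^{+}$ yields $m'(t)\leq v_{tx}(t,\xi(t))$. Taking $h<0$ in the same inequality reverses the sign after division, so $m'(t)\geq v_{tx}(t,\xi(t))$. Passing to the limit pointwise at the fixed point $\xi(t)$ is legitimate because $v_{tx}$ is continuous in $t$ uniformly in $x$, being valued in $H^{1}\subset C_{0}$. Combining the two inequalities gives $m'(t)=v_{tx}(t,\xi(t))$ a.e.

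\emph{Main obstacle.} The delicate step is Step 3. The minimiser $\xi(t)$ need not be unique or depend continuously on $t$, so we must avoid differentiating along $\xi(t)$. The trick is to freeze $\xi(t)$ and compare one-sided difference quotients. This works only at points where $m$ is already known to be differentiable, which is exactly why we need the Lipschitz bound from Step 2.
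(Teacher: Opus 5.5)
Your argument is correct and is essentially the Constantin--Escher proof that the paper cites without reproducing. It has the same three ingredients: the infimum is attained because $v_x(t,\cdot)\in H^1\subset C_0$; the bound $|m(t)-m(s)|\le\|v_x(t)-v_x(s)\|_{L^\infty}$ together with Rademacher's theorem gives a.e. differentiability; and one-sided difference quotients with $\xi(t)$ frozen at a point of differentiability identify $m'$.

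You should delete the opening ``reduction to smooth data''. Here $v$ is an arbitrary element of $C^1([0,T);H^2)$, not a solution of \eqref{103}, so Theorems \ref{the202} and \ref{the202a} say nothing about it, and a density argument would not obviously preserve an a.e.\ derivative identity anyway. Since Steps 1--3 never use that reduction, removing it costs nothing.
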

\begin{theorem}
\label{the205} Let $u_{0}\in H^{s} (s>\frac{3}{2})$ be given and $T$
be the maximal existence time of the corresponding solution $u(t,x)$ to problem \eqref{103}.
Assume that $\lambda(t)\leq \delta$ and some $x_{0}\in \mathbb{R}$ such that
\begin{equation}
u_{0}'(x_{0})<-\delta-\sqrt{\delta^{2}+2K},
\label{2015}
\end{equation}
where $K=\frac{\sqrt{2}}{2}\|u_{0}\|^{3}_{1}+\frac{5}{2}\|u_{0}\|^{2}_{1}.$ Then the corresponding solution of the Cauchy problem \eqref{103} blows up in finite time.
\end{theorem}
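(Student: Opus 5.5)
The plan is to derive a Riccati-type differential inequality for $m(t):=\inf_{x\in\mathbb{R}}u_x(t,x)$ and compare it with an explicitly solvable ODE. By Theorem \ref{the202a} and a density argument it suffices to take $s=3$, so that $u\in C^1([0,T);H^2)$ and Lemma \ref{lem205} applies to $v=u$. This gives a point $\xi(t)$ with $m(t)=u_x(t,\xi(t))$ and $m'(t)=u_{tx}(t,\xi(t))$ a.e. Moreover $u_{xx}(t,\xi(t))=0$, since $\xi(t)$ is an interior minimum of $u_x$.

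Differentiating \eqref{204} in $x$ and using $\partial_x^2 P\ast F=P\ast F-F$ yields
\begin{equation*}
u_{tx}=-uu_{xx}-\frac12u_x^2-\lambda(t)u_x+u^2+h(u)-P\ast\Big(u^2+\frac12u_x^2+h(u)\Big).
\end{equation*}
Evaluating at $\xi(t)$ gives
\begin{equation*}
m'(t)=-\frac12m^2-\lambda(t)m+\Big[u^2+h(u)-P\ast\big(u^2+\tfrac12u_x^2\big)-P\ast h(u)\Big](t,\xi(t)).
\end{equation*}
The bracket is bounded above by discarding $-P\ast(u^2+\frac12u_x^2)\le 0$ and estimating the remaining terms: $u^2$, $u^3$, and $-\frac32u^2\le 0$, together with $P\ast|u|^3$ and $\frac32P\ast u^2$. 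For these I will use $\|u\|_{L^\infty}\le \frac{\sqrt2}{2}\|u\|_1$, $\|P\|_{L^\infty}=\frac12$, $\|P\|_{L^1}=1$, and $P\ast u^2\le \frac12\|u\|_{L^2}^2$. Lemma \ref{lem201} then turns these into bounds in terms of $\|u_0\|_1$. Collecting the cubic and quadratic contributions should give a bound by $K=\frac{\sqrt2}{2}\|u_0\|_1^3+\frac52\|u_0\|_1^2$; I expect this to require only crude but careful bookkeeping of constants.

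This step is where I expect trouble. Lemma \ref{lem201} gives $\|u(t)\|_1\le\|u_0\|_1$ only when $\int_0^t\lambda\ge0$. So I would either assume tacitly that $\lambda\ge0$, which is the physically dissipative case, or note that this is what the constant $K$ implicitly requires.

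Next, as long as $m(t)<0$, the hypothesis $\lambda(t)\le\delta$ gives $-\lambda(t)m\le-\delta m$. Hence
\begin{equation*}
m'(t)\le-\frac12\big(m^2+2\delta m-2K\big)=-\frac12(m-m_+)(m-m_-),\qquad m_\pm=-\delta\pm\sqrt{\delta^2+2K}.
\end{equation*}
By \eqref{2015} we have $m(0)\le u_0'(x_0)<m_-<0$, so $m'(0)<0$. A continuity argument then shows that $m$ is nonincreasing and remains below $m_-$ for all time.

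Finally, since $m(t)\le m(0)<m_-$, we have $m-m_+\le \theta\, m$ with $\theta=\frac{m(0)-m_+}{m(0)}\in(0,1]$, which gives $m'\le -\frac{\theta}{2}m^2$. Integrating this yields
\begin{equation*}
\frac1{m(t)}\ge\frac1{m(0)}+\frac{\theta}{2}t,
\end{equation*}
so $m(t)\to-\infty$ before $t=-\frac{2}{\theta m(0)}$. Theorem \ref{the204} then gives $T<\infty$, that is, wave breaking.
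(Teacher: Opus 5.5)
Your proposal follows the paper's proof: Lemma~\ref{lem205} at a minimum point of $u_x$, the identity $\partial_x^2P\ast F=P\ast F-F$, a bound of the nonlocal bracket by $K$, the replacement $-\lambda m\le-\delta m$ for $m<0$, and a continuity argument that keeps $m(t)$ below $m_-=-\delta-\sqrt{\delta^2+2K}$.

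Your bookkeeping does close: the terms you keep sum to at most $\frac{\sqrt2}{2}\|u_0\|_1^3+\frac54\|u_0\|_1^2\le K$. Your worry about $\lambda$ is justified. Passing from $\|u(t)\|_1$ to $\|u_0\|_1$ needs $\int_0^t\lambda\ge0$, and the paper uses this without comment. Your choice $s=3$ is also the correct one, since Lemma~\ref{lem205} requires $C^1([0,T);H^2)$.

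Only the endgame differs. The paper compares $m$ with the exact solution of $\omega'=-\delta\omega-\frac12\omega^2+K$. With $D=2\sqrt{\delta^2+2K}$ and $a=m_--m(0)>0$, this gives $T\le\frac{2}{D}\log\bigl(1+\frac{D}{a}\bigr)$. Your reduction to $m'\le-\frac{\theta}{2}m^2$ gives $T\le 2/a$, which is cruder but enough for the statement.

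There is one slip in that last step. With your labelling, $m_+=-\delta+\sqrt{\delta^2+2K}\ge0$, so your $\theta=\frac{m(0)-m_+}{m(0)}$ is $\ge1$, not in $(0,1]$. Moreover, when $m_+>0$ the inequality $m-m_+\le\theta m$ is equivalent to $m\ge m(0)$. So it fails in exactly the regime $m<m(0)$ that you need.

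The factor that needs a constant $\theta<1$ is the other one. Take $\theta=\frac{m(0)-m_-}{m(0)}\in(0,1)$; then $m-m_-\le\theta m<0$ is equivalent to $m\le m(0)$. Separately, $m-m_+\le m<0$ because $m_+\ge0$. Multiplying these two negative bounds gives $(m-m_+)(m-m_-)\ge\theta m^2$, hence $m'\le-\frac{\theta}{2}m^2$ and $T\le -2/(\theta m(0))=2/a$.

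One further point. To deduce that $m$ is monotone from $m'<0$ a.e., and to integrate $(1/m)'\ge\theta/2$, you need $m$ to be locally Lipschitz, not merely a.e.\ differentiable. This is proved inside the Constantin--Escher argument, though Lemma~\ref{lem205} as quoted does not state it. The paper relies on it implicitly as well.
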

\begin{proof}
Differentiating  \eqref{203} with respect to $x$, we get
\begin{equation}
u_{tx}+u_{x}^{2}+uu_{xx}+\partial_{x}^{2}\Lambda^{-2}(u^{2}+\frac{u_{x}^{2}}{2}+h(u))+\lambda(t) u_{x}=0.
\label{2016}
\end{equation}
According the relation $\partial_{x}^{2}\Lambda^{-2}=\Lambda^{-2}-1$, we obtain
\begin{equation}
u_{tx}+\frac{1}{2}u_{x}^{2}+uu_{xx}+P\ast\left(u^{2}+\frac{u_{x}^{2}}{2}\right)-u^{2}
+P\ast h(u)-h(u)+\lambda(t) u_{x}=0.
\label{2017}
\end{equation}
According to Lemma \ref{lem205} and Theorem \ref{the201}, there is at least one point $\xi(t)\in \mathbb{R}$ satisfying
$$u_{x}(t,\xi(t))=\inf_{x\in\mathbb{R}}u_{x}(t,x).$$
Let
\begin{equation*}
m(t)=\inf_{x\in \mathbb{R}}u_{x}(t,x)=u_{x}(t,\xi(t)).
\end{equation*}
Then for any given $t\in(0,T)$, we obtain $u_{xx}(t,\xi(t))=0$. Therefore, we have
\begin{equation}
m'(t)=-\frac{1}{2}m^{2}-\lambda(t) m +u^{2}-P\ast\left(u^{2}+\frac{u_{x}^{2}}{2}\right)
-P\ast h(u)+h(u),\ \ a.e.\ on\ (0,T).
\label{2018}
\end{equation}
Note that
\begin{align*}
\left\|P\ast\left(u^{2}+\frac{u_{x}^{2}}{2}\right)\right\|_{L^{\infty}}\leq
\|P\|_{L^{\infty}}\left\|u^{2}+\frac{u_{x}^{2}}{2}\right\|_{L^{1}}
\leq\frac{1}{2}\|u\|_{1}^{2}
\leq\frac{1}{2}\|u_{0}\|_{1}^{2},
\end{align*}
\begin{align*}
u^{2}(t,\xi(t))&=\Big|\int_{-\infty}^{\xi(t)}u(t,y)u_{x}(t,y)dy
-\int_{\xi(t)}^{+\infty}u(t,y)u_{x}(t,y)dy\Big|\nonumber\\
&\leq\frac{1}{2}\int_{-\infty}^{\xi(t)}(u^{2}(t,y)+u_{x}^{2}(t,y))dy
+\frac{1}{2}\int_{\xi(t)}^{+\infty}(u^{2}(t,y)+u_{x}^{2}(t,y))dy\nonumber\\
&=\frac{1}{2}\|u\|_{1}^{2}\leq\frac{1}{2}\|u_{0}\|_{1}^{2},
\end{align*}
thus we get $|u(t,x)|\leq \frac{\sqrt{2}}{2}\|u_{0}\|_{1}.$ Then we have
\begin{equation*}
|h(u)|\leq \frac{\sqrt{2}}{4}\|u_{0}\|^{3}_{1}+\frac{3}{4}\|u_{0}\|^{2}_{1}
\ \ \text{and}\ \
|P\ast h(u)|\leq \frac{\sqrt{2}}{4}\|u_{0}\|^{3}_{1}+\frac{3}{4}\|u_{0}\|^{2}_{1}.
\end{equation*}
Therefore, \eqref{2018} is reduced to
\begin{align}
m'(t)&\leq-\lambda(t) m(t)-\frac{1}{2}m^{2}(t)+K\nonumber\\
&=-\frac{1}{2}(m+\lambda(t)+\sqrt{\lambda^{2}(t)+2K})(m+\lambda(t)-\sqrt{\lambda^{2}(t)+2K}),\ a.e.\ on\ (0,T),
\label{2019}
\end{align}
where $$K=\frac{\sqrt{2}}{2}\|u_{0}\|^{3}_{1}+\frac{5}{2}\|u_{0}\|^{2}_{1}.$$
By \eqref{2015}, we have
$$m(0)<-\delta-\sqrt{\delta^{2}+2K},$$
thereby $m'(0)<0$. From the continuity of $m(t)$ with respect to $t$, it can be obtained that for any $t\in[0,T)$, there is $m'(t)<0$. Therefore
$$m(t)<-\delta-\sqrt{\delta^{2}+2K}.$$
For $m(t)<0$, we have
\begin{eqnarray}
m'(t)\leq-\delta m(t)-\frac{1}{2}m^{2}(t)+K,\ \ a.e.\ on\ (0,T).
\label{2020}
\end{eqnarray}
Let $\omega(t)$ satisfy
\begin{align}
\omega'(t)&=-\delta \omega(t)-\frac{1}{2}\omega^{2}(t)+K\nonumber\\
&=-\frac{1}{2}(\omega+\delta+\sqrt{\delta^{2}+2K})(\omega+\delta-\sqrt{\delta^{2}+2K}),\ a.e.\ on\ (0,T),
\label{2021}
\end{align}
By the comparison principle, we have $m(t)\leq \omega(t)$. Under the given initial condition, $\omega(t)$ blows up in infinite time, therefore $m(t)$ must also blow up in finite time.

For the comparison equation \eqref{2021}, from the initial condition, we have
$$\omega(0)<-\delta-\sqrt{\delta^{2}+2K},$$
 thereby $\omega'(0)<0$. From the continuity of $\omega(t)$ with respect to $t$, we can obtain that $\omega'(t)<0$ for any $t\in[0,T)$. Therefore
$$\omega(t)<-\delta-\sqrt{\delta^{2}+2K}.$$
Then, by solving the above inequality, we obtain
\begin{equation*}
\frac{\omega(0)+\delta+\sqrt{\delta^{2}+2K}}
{\omega(0)+\delta-\sqrt{\delta^{2}+2K}}
e^{\sqrt{\delta^{2}+2K}t}-1
\leq\frac{2\sqrt{\delta^{2}+2K}}
{\omega(t)+\delta-\sqrt{\delta^{2}+2K}}\leq 0.
\end{equation*}
Due to
$$0<\frac{\omega(0)+\delta+\sqrt{\delta^{2}+2K}}
{\omega(0)+\delta-\sqrt{\delta^{2}+2K}}<1,$$
there exists
$$T_{\omega}\leq \frac{1}{\sqrt{\delta^{2}+2K}}\log \frac{\omega(0)+\delta-\sqrt{\delta^{2}+2K}}
{\omega(0)+\delta+\sqrt{\delta^{2}+2K}},$$
such that $$\liminf_{t\rightarrow T}\omega(t)=-\infty.$$
Since $m(t)\leq \omega(t)$, the blowup time $T$ for the original problem satisfies
$$T_{1}^{*}\leq T_{\omega}\leq \frac{1}{\sqrt{\delta^{2}+2K}}\log \frac{\omega(0)+\delta-\sqrt{\delta^{2}+2K}}
{\omega(0)+\delta+\sqrt{\delta^{2}+2K}}.$$
According to Theorem \ref{the203}, the corresponding solution $u(t,x)$ to problem \eqref{103} blows up in finite time. This completes the proof of Theorem \ref{the205}.
\end{proof}
\begin{theorem}
\label{the206}
Under the conditions of Theorem \ref{the205}. If $T_{1}^{\ast}$ is finite, then
\begin{equation*}
\liminf_{t\rightarrow T_{1}^{\ast}}\inf_{x\in \mathbb{R}}u_{x}(t,x)(T_{1}^{\ast}-t)=-2.
\end{equation*}
\end{theorem}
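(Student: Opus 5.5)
We prove the blowup rate by a two-sided Riccati comparison for $m(t)=\inf_{x}u_x(t,x)=u_x(t,\xi(t))$. The starting point is the identity \eqref{2018}. The bounds already used in the proof of Theorem \ref{the205} control all the nonlocal and lower-order terms uniformly in time. Indeed, $\|u(t)\|_1\le\|u_0\|_1$ gives $|u|\le \frac{\sqrt2}{2}\|u_0\|_1$. Hence $u^2$, $P\ast(u^2+\frac12u_x^2)$, $h(u)$ and $P\ast h(u)$ are bounded, and the nonnegative term $P\ast(u^2+\frac12 u_x^2)$ can be dropped or kept as needed. We therefore obtain a constant $K$ (the one from Theorem \ref{the205}) with
\begin{equation*}
-\frac12 m^2-\lambda(t)m-K\le m'(t)\le -\frac12 m^2-\lambda(t)m+K,\quad a.e.\ on\ (0,T_1^\ast).
\end{equation*}
Since $T_1^\ast<\infty$ and $\lambda$ is continuous, we can set $\Lambda_0:=\sup_{[0,T_1^\ast]}|\lambda|<\infty$. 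By the proof of Theorem \ref{the205}, $m(t)$ is decreasing and negative, and by Theorem \ref{the204}, $m(t)\to-\infty$ as $t\to T_1^\ast$.

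The key step is to absorb the linear and constant terms into the quadratic one. Fix $\varepsilon\in(0,1/2)$. Because $m\to-\infty$, there is $t_\varepsilon<T_1^\ast$ such that $\Lambda_0|m|+K\le\varepsilon m^2$ on $(t_\varepsilon,T_1^\ast)$. This gives
\begin{equation*}
-\Big(\frac12+\varepsilon\Big)m^2\le m'(t)\le-\Big(\frac12-\varepsilon\Big)m^2 .
\end{equation*}
Dividing by $m^2$ yields $\frac12-\varepsilon\le \frac{d}{dt}\big(\frac1{m}\big)\le \frac12+\varepsilon$ a.e. Here we use that $m$ is locally Lipschitz, which follows from Lemma \ref{lem205} and the regularity for $s=2$ via density, as in Theorem \ref{the204}. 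Hence $1/m$ is absolutely continuous with $1/m(t)\to0$ as $t\to T_1^\ast$. Integrating over $(t,T_1^\ast)$ gives
\begin{equation*}
\Big(\frac12-\varepsilon\Big)(T_1^\ast-t)\le -\frac1{m(t)}\le\Big(\frac12+\varepsilon\Big)(T_1^\ast-t),
\end{equation*}
that is,
\begin{equation*}
-\frac{1}{\frac12-\varepsilon}\le m(t)(T_1^\ast-t)\le -\frac{1}{\frac12+\varepsilon}
\end{equation*}
for $t$ close to $T_1^\ast$. Letting $\varepsilon\to0$, we conclude that $m(t)(T_1^\ast-t)\to-2$. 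This gives the stated $\liminf$, indeed as a full limit.

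The main obstacle is technical rather than conceptual. We must justify that $m$ is absolutely continuous up to $T_1^\ast$, so that integrating $\frac{d}{dt}(1/m)$ is legitimate, and that $m(t)\to-\infty$ as a genuine limit and not merely along a subsequence. The second point follows from the monotonicity of $m$ established in Theorem \ref{the205}; alternatively, it follows from the upper Riccati inequality once $m$ drops below $-\Lambda_0-\sqrt{\Lambda_0^2+2K}$, which forces $m$ to be decreasing from then on. For the first point, we would work with $s\ge 3$ data, where Lemma \ref{lem205} applies directly, and then pass to general $s>\frac32$ using Theorem \ref{the202a} together with continuous dependence.
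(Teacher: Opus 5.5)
Your proposal is correct and follows essentially the paper's argument: a two-sided Riccati bound for $m(t)$ obtained from \eqref{2018}, division by $m^2$, and integration of $\frac{d}{dt}(1/m)$ up to $T_1^\ast$, which gives the full limit $-2$. The only difference is that the paper first shifts $m$ by $\bar\lambda=\lim_{t\to T_1^\ast}\lambda(t)$ and estimates the resulting cross term, whereas you absorb $\lambda m$ directly into $\varepsilon m^2$ using $\sup_{[0,T_1^\ast]}|\lambda|<\infty$; this is slightly simpler, and your remarks on the absolute continuity of $1/m$ and on $m\to-\infty$ as a genuine limit cover points the paper passes over.
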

\begin{proof}
The argument of proof is divided into six steps.

\noindent\textbf{Step 1. Key differential inequality.}
From \eqref{2018}, we obtain that $m(t)$ satisfies
\begin{align}
m'(t) +\frac{1}{2} \big( m(t) + \lambda(t) \big)^2 = H(t),
\label{2022}
\end{align}
where $$H(t)=u^{2}-P\ast\left(u^{2}+\frac{u_{x}^{2}}{2}\right)
-P\ast h(u)+h(u)+\frac{1}{2}\lambda^{2}(t).$$ It is easy check that $H(t)$ is uniformly bounded: $|H(t)| \leq K+\frac{\delta^{2}}{2}:=K_{1}$ for all $t \in [0, T_{1}^*)$.

\noindent\textbf{Step 2. Introduction of auxiliary variables.}
Let $$z(t) = m(t) + \lambda(t).$$ Then $z(t) \to -\infty$ as $t \to T_{1}^*$. By the continuity of $\lambda(t)$, the limit $\bar\lambda = \lim_{t \to T_{1}^*} \lambda(t)$ exists. For any $\varepsilon > 0$, there exists $\delta > 0$ such that $$|\lambda(t) - \bar\lambda| < \varepsilon,\text{ for }T_{1}^* - t < \delta.$$

Define $\theta(t) = m(t) + \bar\lambda$. Then $\theta(t) \to -\infty$, and $z(t) = \theta(t) + (\lambda(t) - \bar\lambda)$.

\noindent\textbf{Step 3. Estimating the difference $z^2(t) - \theta^2(t)$.}
We compute
\begin{equation}
|z^2(t) - \theta^2(t)| = |2\theta(t)(\lambda(t) - \bar\lambda) + (\lambda(t) - \bar{\lambda})^2|
\leq 2\varepsilon |\theta(t)| + \varepsilon^2.
\label{2023}
\end{equation}

\noindent\textbf{Step 4. Reformulating the equation for $\theta(t)$.}
From \eqref{2022} and $m'(t) = \theta'(t)$, we have
$\theta'(t) = -\frac{1}{2} z^2(t) + H(t).$
Using \eqref{2023}, we obtain
\begin{align}
\theta'(t) &\leq -\frac{1}{2}\theta^2(t) + \varepsilon |\theta(t)| + \frac{1}{2} \varepsilon^2 + K_{1},\nonumber\\
\theta'(t) &\geq -\frac{1}{2}\theta^2(t) - \varepsilon |\theta(t)| -\frac{1}{2} \varepsilon^2 - K_{1}.
\label{2024}
\end{align}
Since $\theta(t) < 0$, let $\phi(t) = -\theta(t) > 0$. Then $\phi(t) \to +\infty$ and \eqref{2024} becomes
\begin{align}
\phi'(t) &\geq \frac{1}{2} \phi^2(t) - \varepsilon \phi(t) - K_2,\nonumber\\
\phi'(t) &\leq \frac{1}{2}\phi^2(t) + \varepsilon \phi(t) + K_2,
\label{2025}
\end{align}
where $K_2 = K_1 + \frac{1}{2} \varepsilon^2$.

%\noindent\textbf{Step 5. Dominant balance and bounds for $\phi'(t)$.}
%Since $\phi(t) \to +\infty$, we can choose $t_1 < T^*$ such that for all $t \in [t_1, T^*)$, the following inequality holds:
%$$\frac{1}{4} \phi^2 (t)\geq \varepsilon \phi(t) + M_1.
%$$
%This is possible because the left-hand side grows quadratically while the right-hand side grows linearly in $\theta$. Under condition (5.1), we analyze the inequalities in (5).

%From the lower bound in (5):

%\begin{equation}
%\phi'(t) \geq \frac{1}{2} \phi^2(t) - \varepsilon \phi(t) - M_1
%=\frac{1}{4} \phi^2(t) + \left( \frac{1}{4} \phi^2(t) - \varepsilon \phi(t) - M_1 \right)
%\geq \frac{1}{4}\phi(t)^2 .
%\end{equation}

%From the upper bound in (5):

%\begin{equation}
%\phi'(t) \leq \frac{1}{2} \phi^2(t) + \varepsilon \phi(t) + M_1
%=\frac{3}{4} \phi^2(t) - \left(\frac{1}{4} \phi^2(t) - \varepsilon \phi(t) - M_1 \right)
%\leq \frac{3}{4} \phi(t)^2.
%\end{equation}

%Combining (5.2) and (5.3) yields:
%\[
%\frac{1}{4} \phi^2(t) \leq \phi'(t) \leq \frac{3}{4} \phi^2(t). \tag{6}
%\]

\noindent\textbf{Step 5. Precise blow-up rate for $\phi(t)$.}
Dividing \eqref{2025} by $\phi^2(t)$ gives
\begin{align}
\frac{1}{2} - \frac{\varepsilon}{\phi(t)} - \frac{K_2}{\phi^2(t)} \leq \frac{\phi'(t)}{\phi^2(t)} \leq \frac{1}{2} + \frac{\varepsilon}{\phi(t)} + \frac{K_2}{\phi^2(t)}.
\label{2026}
\end{align}
Since $\phi(t) \to +\infty$, for any $\varepsilon' > 0$, there exists $t_2 \in [t_1, T_{1}^*)$ such that for $t \in [t_2, T_{1}^*)$,
\begin{align}
\frac{1}{2} - \varepsilon' \leq \frac{\phi'(t)}{\phi^2(t)} \leq \frac{1}{2} + \varepsilon'.
\label{2027}
\end{align}
Note that $\frac{\phi'(t)}{\phi^2(t)} = -\frac{d}{dt} \left( \frac{1}{\phi(t)} \right)$. Integrating \eqref{2027} with respect to $t$ from $t$ to $T_{1}^*$ yields
$$ \left( \frac{1}{2} - \varepsilon' \right)(T_{1}^* - t) \leq \frac{1}{\phi(t)} \leq \left( \frac{1}{2} + \varepsilon' \right)(T_{1}^* - t).
$$
Hence,
\begin{align}
\lim_{t \to T_{1}^*} \phi(t)(T_{1}^* - t) = 2.
\label{2028}
\end{align}

\noindent\textbf{Step 6. Returning to $m(t)$.}
Since $\phi(t) = -m(t) - \bar\lambda$, we have
\begin{align*}
-m(t) - \bar\lambda \sim \frac{2}{T_{1}^* - t},
\end{align*}
which implies
\begin{align*}
m(t)(T_{1}^* - t) \sim -2 - \bar\lambda (T_{1}^* - t) \to -2.
\end{align*}
Therefore,
\begin{align*}
\liminf_{t \to T_{1}^*} m(t)(T_{1}^* - t) = -2,
\end{align*}
i.e.,
\begin{align*}
\liminf_{t \to T_{1}^*} \inf_{x\in \mathbb{R}}u_{x}(t) \cdot (T_{1}^* - t) = -2.
\end{align*}
This completes the proof.
\end{proof}

In what follows, we track the dynamics of some linear combinations of $u$ and $u_{x}$ along the characteristic emanating from $x_1$. Given the solution of \eqref{204}, the characteristic of \eqref{204} is defined by the following differential equation
\begin{equation}
\begin{cases}
q_{t}(t,x)=u(t,q(t,x)),\ (t,x)\in[0,T)\times\mathbb{R},\\
q(0,x)=x,\ x\in \mathbb{R}.
\end{cases}
\label{2029}
\end{equation}
A direct calculation shows that
\begin{equation*}
\begin{cases}
q_{tx}(t,x)=u_{x}(t,q(t,x))q_{x}(t,x),\ (t,x)\in[0,T)\times\mathbb{R},\\
q_{x}(0,x)=1,\ x\in \mathbb{R}.
\end{cases}
\end{equation*}
Applying the classical results in the theory of ODEs, we can prove that the map $q(t,\cdot)$ is an increasing diffeomorphism of  $\mathbb{R}$ with
$$q_{x}(t,x)=e^{\int_{0}^{t}u_{x}(\tau,q(\tau,x))d\tau}>0,\ (t,x)\in [0,T)\times \mathbb{R}.$$

Let us denote $'$ to be the derivative $\partial_{t}+u\partial_{x}$. Then the dynamics of $u$ and $u_{x}$ along the characteristics $q(t,x)$ are formulated in the following lemma.
\begin{lemma}
\label{lem303}  Let $u_{0}(x)\in H^{s}\ (s>\frac{3}{2})$. Then $u(t,q(t,x))$ and $u_{x}(t,q(t,x))$ satisfying the following integro-differential equations
\begin{align*}
u'(t)=&P_{+}\ast\left(u^{2}+\frac{1}{2}u_{x}^{2}+h(u)\right)-P_{-}\ast\left(u^{2}+\frac{1}{2}u_{x}^{2}+h(u)\right)-\lambda(t) u,\\
u_{x}'(t)=&-\frac{1}{2}u_{x}^{2}-P_{+}\ast\left(u^{2}+\frac{1}{2}u_{x}^{2}+h(u)\right)-P_{-}\ast\left(u^{2}+\frac{1}{2}u_{x}^{2}
+h(u)\right)+u^{2}\\
&+h(u)-\lambda(t) u_{x}.
\end{align*}
\end{lemma}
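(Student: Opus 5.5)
Writing $F:=u^{2}+\frac{1}{2}u_{x}^{2}+h(u)$, we will obtain both identities by evaluating the nonlocal forms \eqref{204} and \eqref{2017} along the characteristic $q(t,x)$ defined in \eqref{2029}, and then splitting the kernel $P$ into its one-sided pieces. Set
$$P_{+}(x)=\tfrac12 e^{-x}\chi_{[0,\infty)}(x),\qquad P_{-}(x)=\tfrac12 e^{x}\chi_{(-\infty,0]}(x),$$
so that $P=P_{+}+P_{-}$ and, in the distributional sense away from the origin, $\partial_{x}P=P_{-}-P_{+}$. Since $P$ is continuous, no delta mass appears at $x=0$. For $f\in L^{1}$ this gives $\partial_{x}P\ast f=P_{-}\ast f-P_{+}\ast f$. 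We first work with $s\geq 3$ (or $u_{0}\in H^{\infty}$, via Theorem \ref{the202a}) so that $u,u_{x}\in C^{1}$ and the chain rule along $q$ is classical. The general case $s>3/2$ will then follow from the continuous dependence in Theorem \ref{the202} together with density, because both identities can be written in integrated-in-time form and pass to the limit.

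\textbf{Step 1 (equation for $u$).} By the chain rule and \eqref{2029},
$$\frac{d}{dt}u(t,q(t,x))=u_{t}+u\,u_{x}$$
evaluated at $(t,q)$. Substituting \eqref{204} gives $u'=-\partial_{x}P\ast F-\lambda(t)u$. The kernel splitting turns $-\partial_{x}P\ast F$ into $P_{+}\ast F-P_{-}\ast F$, which yields the first identity. The only thing to check is that $F\in L^{1}$. This holds because $u\in H^{1}$ and $u\in L^{\infty}$ (the bound $|u|\leq \frac{\sqrt2}{2}\|u_{0}\|_{1}$ from the proof of Theorem \ref{the205}), so $u^{3}\in L^{1}$ as well.

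\textbf{Step 2 (equation for $u_{x}$).} Similarly,
$$\frac{d}{dt}u_{x}(t,q(t,x))=u_{tx}+u\,u_{xx}.$$
From \eqref{2016} together with $\partial_{x}^{2}\Lambda^{-2}=\Lambda^{-2}-1$, equivalently \eqref{2017}, we obtain
$$u_{tx}+u u_{xx}=-\tfrac12 u_{x}^{2}-P\ast F+u^{2}+h(u)-\lambda(t)u_{x}.$$
Here the $-\frac12u_x^2$ term arises from $-u_x^2+\frac12u_x^2$. Writing $P\ast F=P_{+}\ast F+P_{-}\ast F$ gives exactly the stated second identity. Note that the terms $u^{2}$ and $h(u)$ come from the identity part $-1$ of $\partial_x^2\Lambda^{-2}$ applied to $u^{2}+h(u)$.

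\textbf{Main obstacle.} The hard part is not the algebra but justifying the calculus at the regularity $s>3/2$, where $u_{xx}$ is only in $H^{s-2}$ and the pointwise chain rule for $u_{x}(t,q)$ is not immediate. We will handle this by first proving the identities for smooth solutions, written in integrated form
$$u_{x}(t,q(t,x))-u_{0}'(x)=\int_{0}^{t}[\cdots]\,d\tau .$$
We then take approximating data $u_{0}^{n}\to u_{0}$ in $H^{s}$. Continuous dependence gives $u^{n}\to u$ in $C([0,T'];H^{s})$, hence uniform convergence of $u^{n}$ and $u_{x}^{n}$. Gronwall applied to the characteristic ODE gives uniform convergence of $q^{n}\to q$ on compacts. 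Finally, the convolution terms converge uniformly because $P_{\pm}\in L^{\infty}$ and $F(u^{n})\to F(u)$ in $L^{1}$.
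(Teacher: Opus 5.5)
Your proposal is correct and follows the paper's route. You compute $u'=u_t+uu_x$ and $u_x'=u_{tx}+uu_{xx}$ from \eqref{204} and \eqref{2016}, using $\partial_x^2\Lambda^{-2}=\Lambda^{-2}-1$, and then split the kernel as $P=P_++P_-$, $P_x=P_--P_+$, exactly as the paper does; your density argument for the chain rule along $q$ when $3/2<s<2$ is a rigorous supplement that the paper's formal computation omits, not a different method.
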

\begin{proof}
Define the two convolution operators $P_{+}$ and $P_{-}$ as
$$P_{+}\ast f(x)=\frac{1}{2}e^{-x}\int_{-\infty}^{x}e^{y}f(y)dy,\ \
P_{-}\ast f(x)=\frac{1}{2}e^{x}\int^{+\infty}_{x}e^{-y}f(y)dy,$$
then $P=P_{+}+P_{-}$ and $P_{x}=P_{-}-P_{+}$.
From \eqref{204} and \eqref{2016}, we get
\begin{align*}
u'(t)=&u_{t}+uu_{x}\nonumber\\
=&-\partial_{x}P\ast\left(u^{2}+\frac{1}{2}u_{x}^{2}+h(u)\right)-\lambda(t) u\nonumber\\
=&P_{+}\ast\left(u^{2}+\frac{1}{2}u_{x}^{2}+h(u)\right)-P_{-}\ast\left(u^{2}+\frac{1}{2}u_{x}^{2}+h(u)\right)-\lambda(t) u,\nonumber\\
u_{x}'(t)=&u_{tx}+uu_{xx}\nonumber\\
=&-\frac{1}{2}u_{x}^{2}-P\ast\left(u^{2}+\frac{u_{x}^{2}}{2}+h(u)\right)+u^{2}
+h(u)-\lambda(t) u_{x}\nonumber\\
=&-\frac{1}{2}u_{x}^{2}-P_{+}\ast\left(u^{2}+\frac{1}{2}u_{x}^{2}+h(u)\right)-P_{-}\ast\left(u^{2}+\frac{1}{2}u_{x}^{2}
+h(u)\right)+u^{2}\\
&+h(u)-\lambda(t) u_{x}.
\end{align*}
This completes the proof of lemma \ref{lem303}.
\end{proof}

In what follows, we give the second blowup criteria. To this end, we need the following lemma.
\begin{lemma} \cite{Chen2016}
\label{lem207}Let $f(t)\in C^{1}(\mathbb{R}), a>0, b>0$ and $f(0)>\sqrt{\frac{b}{a}}$. If $f'(t)\geq af^{2}(t)-b$, then
$$f(t)\rightarrow +\infty,\ \ as\ \ t\rightarrow T^{*}\leq \frac{1}{2\sqrt{ab}}log\frac{f(0)+\sqrt{\frac{b}{a}}}{f(0)-\sqrt{\frac{b}{a}}}.$$
\end{lemma}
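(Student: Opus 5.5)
The plan is a direct Riccati-type comparison argument for a scalar differential inequality, with the quantity $g(t)=f(t)-\sqrt{b/a}$ tracked explicitly. First I will show that $f$ stays above $\sqrt{b/a}$ and is increasing. Since $f(0)>\sqrt{b/a}$, we have $f'(0)\ge af^2(0)-b>0$. Suppose there were a first time $t_0>0$ with $f(t_0)=\sqrt{b/a}$. On $[0,t_0)$ we have $f>\sqrt{b/a}$, hence $f'\ge af^2-b>0$ there. So $f$ is increasing on $[0,t_0]$, which gives $f(t_0)\ge f(0)>\sqrt{b/a}$, a contradiction. Therefore $f(t)>\sqrt{b/a}$ and $f'(t)>0$ on the whole interval of existence.

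Next I will integrate the inequality in separated-variable form. Factoring, $af^2-b=a(f-\sqrt{b/a})(f+\sqrt{b/a})$, and both factors are positive, so
\begin{equation*}
\frac{f'}{(f-\sqrt{b/a})(f+\sqrt{b/a})}\ge a .
\end{equation*}
By partial fractions,
\begin{equation*}
\frac{1}{(f-\sqrt{b/a})(f+\sqrt{b/a})}=\frac{1}{2\sqrt{b/a}}\Big(\frac{1}{f-\sqrt{b/a}}-\frac{1}{f+\sqrt{b/a}}\Big),
\end{equation*}
so the left-hand side is $\frac{d}{dt}\Big[\frac{1}{2\sqrt{b/a}}\log\frac{f-\sqrt{b/a}}{f+\sqrt{b/a}}\Big]$. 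Integrating from $0$ to $t$ and using $2a\sqrt{b/a}=2\sqrt{ab}$ gives
\begin{equation*}
\frac{f(t)-\sqrt{b/a}}{f(t)+\sqrt{b/a}}\ge \frac{f(0)-\sqrt{b/a}}{f(0)+\sqrt{b/a}}\,e^{2\sqrt{ab}\,t}.
\end{equation*}

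Finally I will read off the blow-up time. The left-hand side is strictly less than $1$ for every finite value of $f$. The right-hand side reaches $1$ exactly at
\begin{equation*}
t_1=\frac{1}{2\sqrt{ab}}\log\frac{f(0)+\sqrt{b/a}}{f(0)-\sqrt{b/a}}.
\end{equation*}
Hence $f$ cannot remain finite on $[0,t_1]$. Because $f$ is increasing, this means $f(t)\to+\infty$ as $t\to T^*$ for some $T^*\le t_1$, which is the stated estimate. Here the hypothesis $f\in C^1(\mathbb{R})$ is read as $C^1$ on its maximal interval of existence, since otherwise the conclusion would be contradictory.

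The main, though mild, obstacle is the first step: the strict positivity of $f-\sqrt{b/a}$ must hold for all $t$, since it is what justifies dividing by it and taking logarithms. The first-crossing-time argument above settles this cleanly. The remaining work is routine: the partial-fraction identity and checking that $2a\sqrt{b/a}=2\sqrt{ab}$ produces the constant in the statement.
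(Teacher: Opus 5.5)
The paper does not prove Lemma~\ref{lem207}; it quotes it from Chen et al. Your argument is the standard proof behind it, and it is correct. The first-crossing argument gives forward invariance of $\{f>\sqrt{b/a}\}$ and monotonicity of $f$. The partial-fraction integration is sound. So is the observation that $\frac{f-\sqrt{b/a}}{f+\sqrt{b/a}}<1$ while $\beta e^{2\sqrt{ab}\,t}$ reaches $1$ at $t_1$, where $\beta=\frac{f(0)-\sqrt{b/a}}{f(0)+\sqrt{b/a}}\in(0,1)$.

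One point deserves to be stated explicitly, and you already flag it. By itself, the inequality only shows that the interval on which $f$ is defined and finite has length at most $t_1$. Upgrading this to $f\to+\infty$ at the endpoint uses that the interval can end only through unboundedness, which is your reading of the hypothesis. If you want the blow-up to be visible directly, solve your inequality for $f$. Since $f+\sqrt{b/a}>0$, and $1-\beta e^{2\sqrt{ab}\,t}>0$ for $t<t_1$,
\[
f(t)\ \ge\ \sqrt{b/a}\;\frac{1+\beta e^{2\sqrt{ab}\,t}}{1-\beta e^{2\sqrt{ab}\,t}},
\]
which is an explicit lower barrier that tends to $+\infty$ as $t\uparrow t_1$.
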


In what follows, we give the second blowup criterion for problem \eqref{202}.
\begin{theorem}
\label{the207} Let $u_{0}(x)\in H^{s}$ and $\lambda(t)\leq \delta$ for $\delta>0$. Assume that there exists a point $x_{1}\in \mathbb{R}$ such that
\begin{equation}
u_{0x}(x_{1})
<-|u_{0}(x_{1})|-\delta-\sqrt{\delta^{2}+2K},
\label{2030}
\end{equation}
where $K=\frac{\sqrt{2}}{2}\|u_{0}\|^{3}_{1}+\frac{5}{2}\|u_{0}\|^{2}_{1}$.
Then the corresponding solution $u(t,x)$ of \eqref{204} with the initial datum $u_{0}(x)$ blows up in finite time with an estimate of the blowup
time $T_{2}^{\ast}$ as
$$T_{2}^{\ast}\leq\frac{1}{\sqrt{\delta^{2}+2K}}\log\frac{g(0)-\delta+\sqrt{\delta^{2}+2K}}{g(0)-\delta-\sqrt{\delta^{2}+2K}}.$$
\end{theorem}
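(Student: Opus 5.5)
The idea is to follow the characteristic $q(t,x_{1})$ from \eqref{2029} and track two Riccati-type quantities built from $u$ and $u_{x}$. Along this characteristic set
$$f(t)=u(t,q(t,x_{1}))-u_{x}(t,q(t,x_{1})),\qquad k(t)=-u(t,q(t,x_{1}))-u_{x}(t,q(t,x_{1})).$$
Hypothesis \eqref{2030} says $-u_{0x}(x_{1})-|u_{0}(x_{1})|>\delta+\sqrt{\delta^{2}+2K}$. Hence both $f(0)$ and $k(0)$ exceed $\delta+\sqrt{\delta^{2}+2K}$, and $g(0)$ in the statement is $\min\{f(0),k(0)\}=-u_{0x}(x_{1})-|u_{0}(x_{1})|$. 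Write $F=u^{2}+\frac12u_{x}^{2}+h(u)$. Subtracting and adding the two identities of Lemma \ref{lem303} gives
$$f'=\tfrac12u_{x}^{2}+2P_{+}\ast F-u^{2}-h(u)-\lambda(t)f,\qquad k'=\tfrac12u_{x}^{2}+2P_{-}\ast F-u^{2}-h(u)-\lambda(t)k.$$

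Next I bound the lower-order terms uniformly, using only the energy bound of Lemma \ref{lem201}. Since $\lambda$ need not be nonnegative, I would use $E(t)\le E(0)$ when $\lambda\ge0$; otherwise the constants must absorb the growth factor, as the paper implicitly does in Theorem \ref{the205}. From $|u|\le\frac{\sqrt2}{2}\|u_{0}\|_{1}$ we get the same bounds on $u^{2}$, $|h(u)|$ and $|P_{\pm}\ast h(u)|$ as in the proof of Theorem \ref{the205}. For the convolution term I use $P_{\pm}\ast(u^{2}+\frac12u_{x}^{2})\ge0$; if needed, the sharper classical bound $P_{\pm}\ast(u^{2}+\frac12u_{x}^{2})\ge\frac14u^{2}$ is available. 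Together these give
$$2P_{\pm}\ast F-u^{2}-h(u)\ge-K,$$
and hence
$$f'\ge\tfrac12u_{x}^{2}-K-\lambda(t)f,\qquad k'\ge\tfrac12u_{x}^{2}-K-\lambda(t)k .$$

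Now pass to a Riccati inequality for $g=\min\{f,k\}$. Note that $-u_{x}=\frac{f+k}{2}$, so whenever $f,k>0$ we have $\frac12u_{x}^{2}\ge\frac12g^{2}$. As long as $g>0$, the assumption $\lambda\le\delta$ gives $-\lambda g\ge-\delta g$. Hence $g$, which is Lipschitz and satisfies the inequality for the active branch a.e., obeys
$$g'\ge\tfrac12g^{2}-\delta g-K=\tfrac12(g-\delta)^{2}-\tfrac12(\delta^{2}+2K).$$
At $t=0$ the right side is positive, and a continuity (bootstrap) argument shows that $g$ is increasing and stays above $\delta+\sqrt{\delta^{2}+2K}$ for all later times. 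This also keeps $f,k>0$, which the previous inequality used. Then apply Lemma \ref{lem207} to $\tilde f=g-\delta$ with $a=\frac12$ and $b=\frac12(\delta^{2}+2K)$, so that $\sqrt{b/a}=\sqrt{\delta^{2}+2K}$ and $\frac{1}{2\sqrt{ab}}=\frac{1}{\sqrt{\delta^{2}+2K}}$. This yields $g\to+\infty$ no later than the stated $T_{2}^{\ast}$. Since $-u_{x}(t,q(t,x_{1}))\ge g(t)$, we get $\inf_{x}u_{x}\to-\infty$, and Theorem \ref{the204} shows the solution blows up in finite time.

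The step I expect to be hardest is the uniform lower bound $2P_{\pm}\ast F-u^{2}-h(u)\ge-K$ with exactly the constant $K$ of the statement. This requires bookkeeping of $|h|$ and $|P_{\pm}\ast h|$: note $\|P_{\pm}\|_{L^{1}}=\frac12$, so $|P_{\pm}\ast h|\le\frac12\|h\|_{L^{\infty}}$, which leaves room. It also requires handling possible growth of $E(t)$ when $\lambda<0$. A second technical point is the a.e. differentiability of the minimum $g$. If that causes trouble, I would instead run the argument on $f$ and $k$ separately, using $u_{x}^{2}\ge fk$ together with the monotonicity of both.
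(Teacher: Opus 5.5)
Your proof is correct. It follows the paper's skeleton: track $u\mp u_x$ along $q(t,x_1)$, derive Riccati-type inequalities, reduce to one scalar quantity, and apply Lemma \ref{lem207}. The difference is the choice of scalar quantity.

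\textbf{The paper's route.} It works with $\Phi=u-u_x=f$ and $\Psi=u+u_x=-k$. It uses $P_\pm\ast(u^2+\frac12u_x^2)\ge\frac14u^2$ to write $\frac12u_x^2-\frac12u^2=-\frac12\Phi\Psi$. It replaces $\lambda$ by $\delta$ through the comparison system \eqref{2035}. It then studies the geometric mean $g=\sqrt{-\bar\Phi\bar\Psi}=\sqrt{fk}$, whose Riccati inequality comes from a product-rule computation and $\bar\Phi-\bar\Psi\ge 2g$.

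\textbf{What your route buys.} You take $g=\min\{f,k\}$ and substitute $\lambda\le\delta$ directly using $f,k>0$. This makes $g'\ge\frac12g^2-\delta g-K$ immediate on each active branch. It also sidesteps the paper's appeal to a comparison principle for the two-dimensional system \eqref{2035}, which is only sketched there. Your $K$ bookkeeping with $\|P_\pm\|_{L^1}=\frac12$ is exactly what is needed.

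\textbf{What it costs.}
\begin{itemize}
\item $\min\{f,k\}$ is only Lipschitz, so Lemma \ref{lem207}, stated for $C^1$ functions, must be used in its a.e.\ form. This is routine: integrate $\frac{d}{dt}\log\frac{g-\delta-c}{g-\delta+c}\ge c$, with $c=\sqrt{\delta^2+2K}$.
\item Your blowup-time estimate is weaker than the paper's. The paper's $g(0)$ is $\sqrt{u_{0x}^2(x_1)-u_0^2(x_1)}=\sqrt{f(0)k(0)}$, which is at least your $-u_{0x}(x_1)-|u_0(x_1)|$. The bound $\frac1c\log\frac{g(0)-\delta+c}{g(0)-\delta-c}$ is decreasing in $g(0)$, so your estimate is strictly weaker unless $u_0(x_1)=0$.
\end{itemize}

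\textbf{Recovering the sharper estimate.} You can get the paper's estimate inside your own framework. With $G=\sqrt{fk}$, which is $C^1$, your two inequalities give
$G'=\frac{f'k+fk'}{2G}\ge\frac{(\frac12u_x^2-K)(f+k)}{2G}-\delta G\ge\frac12G^2-\delta G-K$.
The last step uses $u_x^2=\frac14(f+k)^2\ge G^2>2K$ and $f+k\ge 2G$.

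Your fallback also works as stated. The bound $\frac12u_x^2\ge\frac12fk$ gives $f'\ge f(\frac k2-\delta)-K>0$ whenever $f,k>\delta+c$, and symmetrically for $k$.

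The tacit use of $E(t)\le E(0)$, i.e.\ $\int_0^t\lambda\ge0$, which you flagged, is shared with the paper.
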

\begin{proof}
Let $T$ be the maximal existence time of the unique solution $u(t,x)$ guaranteed by Theorem \ref{the202}.

In order to obtain the blowup result, we define two $C^{1}-$differentiable functions $\Phi(t)$ and $\Psi(t)$ as
$$\Phi(t)=(u-u_{x})(t,q(t,x_{1})),\ \ \ \Psi(t)=(u+u_{x})(t,q(t,x_{1})).$$
Then we have
\begin{align}
\Phi'(t)=&u'(t)-u'_{x}(t)\nonumber\\
=&\frac{1}{2}u_{x}^{2}+2P_{+}\ast\left(u^{2}+\frac{1}{2}u_{x}^{2}+h(u)\right)-u^{2}-h(u)-\lambda(t) (u-u_{x}).\label{2031}
\\
\Psi'(t)=&u'(t)+u'_{x}(t)\nonumber\\
=&-\frac{1}{2}u_{x}^{2}-2P_{-}\ast\left(u^{2}+\frac{1}{2}u_{x}^{2}+h(u)\right)+u^{2}+h(u)-\lambda(t) (u+u_{x}).
\label{2032}
\end{align}
Note that
\begin{align*}
P_{+}\ast\left(u^{2}+u_{x}^{2}\right)=&\frac{1}{2}e^{-x}\int_{-\infty}^{x}e^{\xi}\left(u^{2}+u_{\xi}^{2}\right)d\xi
\geq e^{-x}\int_{-\infty}^{x}e^{\xi}uu_{\xi}d\xi
=\frac{1}{2}u^{2}-P_{+}\ast u^{2},
\end{align*}
thus
$$P_{+}\ast\left(2u^{2}+u_{x}^{2}\right)\geq \frac{1}{2}u^{2},\ \ \text{i.e.,}\ \ P_{+}\ast\left(u^{2}+\frac{1}{2}u_{x}^{2}\right)\geq \frac{1}{4}u^{2}.$$
Similarly,
 $$P_{-}\ast\left(u^{2}+\frac{1}{2}u_{x}^{2}\right)\geq \frac{1}{4}u^{2},$$
$$|h(u)|,\ \ |P_{+}\ast h(u)|\leq\frac{\sqrt{2}}{4}\|u_{0}\|_{1}^{3}+\frac{3}{4}\|u_{0}\|_{1}^{2}.$$
Therefore, \eqref{2031} and \eqref{2032} are reduced to
\begin{align}
\Phi'(t)\geq &\frac{1}{2}u_{x}^{2}-\frac{1}{2}u^{2}+2P_{+}\ast h(u)-h(u)-\lambda(t) (u-u_{x})\nonumber\\
\geq&-\frac{1}{2}\Phi \Psi-\lambda(t)\Phi-K\nonumber\\
=&-\frac{1}{2}\Phi(\Psi+2\lambda(t))-K,\label{2033}
\\
\Psi'(t)\leq&-\frac{1}{2}u_{x}^{2}+\frac{1}{2}u^{2}-2P_{-}\ast h(u)+h(u)-\lambda(t)\Psi\nonumber\\
\leq&\frac{1}{2}\Phi \Psi-\lambda(t)\Psi+K\nonumber\\
=&\frac{1}{2}\Psi(\Phi+2\lambda(t))+K.\label{2034}
\end{align}

In what follows, we consider the two equations associated to the equations \eqref{2033}--\eqref{2034} as
\begin{align}
\bar{\Phi}'(t)=-\frac{1}{2}\Phi(\Psi+2\delta)-K,\ \
\bar{\Psi}'(t)=\frac{1}{2}\Psi(\Phi+2\delta)+K,
\label{2035}
\end{align}
with $\bar{\Phi}(0)=\Phi(0),\ \bar{\Psi}(0)=\Psi(0)$.

By the comparison principle, we can conclude that if the solution of \eqref{2035} blows up in finite time, then the solution of \eqref{2033}-\eqref{2034} will also blow up in finite time.
From \eqref{2030}, we get
$$\bar{\Phi}(0)>\delta+\sqrt{\delta^{2}+2K}>0,\ \ \bar{\Phi}'(0)>0,
$$
$$\bar{\Psi}(0)<-\delta-\sqrt{\delta^{2}+2K}<0,\ \ \bar{\Psi}'(0)<0.
$$
Therefore, over the time of existence along the characteristic emanating from $x_{1}$, it always holds that
$$\bar{\Phi}(t)>0,\ \ \bar{\Psi}(t)<0,\ \ \bar{\Phi}\bar{\Psi}(t)<-(\delta+\sqrt{\delta^{2}+2K})^{2}<0.$$

In what follows, we consider the evolution of the quantity $g(t)=\sqrt{-\bar{\Phi}\bar{\Psi}(t)}$. Since $\bar{\Phi}-\bar{\Psi}\geq 2g$, then we get
\begin{align}
g'(t)=&-\frac{\bar{\Phi}'\bar{\Psi}+\bar{\Phi}\bar{\Psi}'}{2\sqrt{-\bar{\Phi}\bar{\Psi}(t)}}\nonumber\\
=&-\frac{\left(-\frac{1}{2}\bar{\Phi}\bar{\Psi}-K-\delta \bar{\Phi}\right)\bar{\Psi}
+\bar{\Phi}\left(\frac{1}{2}\bar{\Phi}\bar{\Psi}+K-\delta \bar{\Psi}\right)}{2\sqrt{-\bar{\Phi}\bar{\Psi}}}\nonumber\\
=&-\frac{1}{2\sqrt{-\bar{\Phi}\bar{\Psi}}}\left(K+\frac{1}{2}\bar{\Phi}\bar{\Psi}\right)(\bar{\Phi}-\bar{\Psi})+\frac{\delta \bar{\Phi}\bar{\Psi}}{\sqrt{-\bar{\Phi}\bar{\Psi}}}
\nonumber\\
\geq&\frac{1}{2}g^{2}-\delta g-K\nonumber\\
=&\frac{1}{2}(g-\delta)^{2}-\frac{\delta^{2}}{2}-K.
\label{2036}
\end{align}
The monotonicity of $\bar{\Phi}(t)$ and $\bar{\Psi}(t)$ implies that $g(t)=\sqrt{-\bar{\Phi}\bar{\Psi}(t)}$ is increasing with $t\in[0,T)$, thus
\begin{equation}
g(t)=\sqrt{-\bar{\Phi}\bar{\Psi}(t)}\geq\sqrt{-\bar{\Phi}\bar{\Psi}(0)}>(\delta+\sqrt{\delta^{2}+2K})^{2}>0.
\label{2037}
\end{equation}
Therefore, we get
$$\frac{1}{2}(g-\delta)^{2}-\frac{\delta^{2}}{2}-K>0.$$
From lemma \ref{lem207}, we can conclude that $\lim_{t\rightarrow T_{2}^{\ast}}g(t)=+\infty$. Moreover, the maximal existence time $T_{2}^{\ast}$ can be given by
$$T_{2}^{\ast}\leq\frac{1}{\sqrt{\delta^{2}+2K}}\log\frac{g(0)-\delta+\sqrt{\delta^{2}+2K}}{g(0)-\delta-\sqrt{\delta^{2}+2K}},$$
where $g(0)=\sqrt{u_{0x}^{2}(x_{1})-u_{0}^{2}(x_{1})}$.
Since $g(t)\leq \frac{\bar{M}-\bar{N}}{2}=-u_{x}(t,q(t,x_{1}))$, we get
$$\lim_{t\rightarrow T_{2}^{\ast}}u_{x}(t,q(t,x_{1}))=-\infty.$$

From the above argument, we can obtain the information for the location point of blowup. In fact, this information is a consequence of the elementary calculus inequality for continuously derivable functions
$$f: |f(t)- f(0)| \leq \kappa |t|,\ \kappa=\sup |f'|.$$
We can to apply this relation to the function $f(t)=q(t,x_0)$.
From \eqref{2029}, we get
$$\kappa\leq \|u(t)\|_{L^{\infty}}\leq \frac{1}{\sqrt 2}\|u\|_{1}\leq\frac{1}{\sqrt 2}\sqrt{E(0)}.$$
Moreover, $f(0)=x_1$.
So, we deduce, for $0<t<T^{*}$,
$$|q(t,x_1)-x_1|\leq\frac{1}{\sqrt 2}\sqrt{E(0)}t.$$
Hence, the blowup point $q(T^{*},x_1)$ must be inside the interval
$$\left[x_{1}-\frac{1}{\sqrt 2}\sqrt{E(0)}T^{*},
x_{1}+\frac{1}{\sqrt 2}\sqrt{E(0)}T^{*}\right].$$
This completes the proof of theorem \ref{the207}.
\end{proof}
\begin{remark}
Theorem \ref{the205} is a pure slope criterion, while Theorem \ref{the207} is a mixed criterion that also accounts for the initial amplitude. The latter is generally more restrictive but provides additional geometric and quantitative information about the blowup. Both results illustrate how sufficiently large negative slope can lead to wave breaking, even in the presence of weak dissipation.
\end{remark}

\begin{theorem}
\label{the208}
Under the conditions of Theorem \ref{the207}. If $T_{2}^{\ast}$ is finite, then
\begin{equation*}
\lim_{t\rightarrow T_{2}^{\ast}}u_{x}(t,q(t,x_{1}))(T_{2}^{\ast}-t)=-2.
\end{equation*}
\end{theorem}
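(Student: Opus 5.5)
We follow the six-step scheme of the proof of Theorem \ref{the206}, now working along the single characteristic $q(t,x_1)$ rather than with the running infimum $m(t)$. Set $w(t)=u_x(t,q(t,x_1))$. By Lemma \ref{lem303},
\begin{equation*}
w'(t)+\frac{1}{2}\big(w(t)+\lambda(t)\big)^2=\widetilde H(t),
\end{equation*}
where
\begin{equation*}
\widetilde H(t)=u^2+h(u)-P\ast\Big(u^2+\frac12u_x^2+h(u)\Big)+\frac12\lambda^2(t),
\end{equation*}
all evaluated at $(t,q(t,x_1))$. Using the bounds already established in the proof of Theorem \ref{the205}, namely $|u|\le\frac{\sqrt2}{2}\|u_0\|_1$, $\|P\ast(u^2+\frac12u_x^2)\|_{L^\infty}\le\frac12\|u_0\|_1^2$ and the bounds on $|h(u)|$ and $|P\ast h(u)|$, we get $|\widetilde H(t)|\le K+\frac{\delta^2}{2}=K_1$ on $[0,T_2^\ast)$. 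This bound needs $\lambda$ bounded in absolute value near $T_2^\ast$. It holds because $\lambda$ is continuous on the compact interval $[0,T_2^\ast]$, so we replace $\delta^2$ by $\sup|\lambda|^2$ if necessary. The only qualitative input from Theorem \ref{the207} is $w(t)\to-\infty$ as $t\to T_2^\ast$. This follows from $g(t)\le -u_x(t,q(t,x_1))$ together with $g\to+\infty$ (after the comparison argument there), which holds as a full limit, not merely a liminf.

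Next, let $\bar\lambda=\lambda(T_2^\ast)$, set $\phi(t)=-w(t)-\bar\lambda$, and fix $\varepsilon>0$. Close to $T_2^\ast$ we have $|\lambda(t)-\bar\lambda|<\varepsilon$, and exactly as in Steps 3--4 of the proof of Theorem \ref{the206} we obtain
\begin{equation*}
\frac12\phi^2-\varepsilon\phi-K_2\le\phi'\le\frac12\phi^2+\varepsilon\phi+K_2,
\qquad K_2=K_1+\tfrac12\varepsilon^2 .
\end{equation*}
Since $\phi\to+\infty$, for any $\varepsilon'>0$ there is $t_2<T_2^\ast$ such that $\frac12-\varepsilon'\le\phi'/\phi^2\le\frac12+\varepsilon'$ on $[t_2,T_2^\ast)$.

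Then integrate $-\frac{d}{dt}(1/\phi)$ from $t$ to $T_2^\ast$, using $1/\phi(t)\to0$ as $t\to T_2^\ast$. This gives
\begin{equation*}
\Big(\tfrac12-\varepsilon'\Big)(T_2^\ast-t)\le\frac1{\phi(t)}\le\Big(\tfrac12+\varepsilon'\Big)(T_2^\ast-t),
\end{equation*}
so $\phi(t)(T_2^\ast-t)\to2$. Finally, $w(t)(T_2^\ast-t)=-\phi(t)(T_2^\ast-t)-\bar\lambda(T_2^\ast-t)\to-2$, which is a genuine limit because $w$ is a single smooth function along the characteristic.

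The main obstacle is the rigor of the blowup input. We need $w\in C^1$ along the characteristic, which holds for $s\ge3$ and extends by density and Theorem \ref{the202a}. More importantly, we need $w(t)\to-\infty$ exactly at the maximal time $T_2^\ast$, not at some earlier comparison time. Here one must check that the comparison in Theorem \ref{the207} gives $u_x(t,q(t,x_1))\le -g(t)$ with $g$ blowing up, and that $T_2^\ast$ is defined as the time at which $w$ diverges. Theorem \ref{the204} guarantees the solution cannot continue past a time where $u_x$ is unbounded below, and conversely $w$ stays bounded on any compact subinterval of $[0,T_2^\ast)$. Once $w\to-\infty$ at $T_2^\ast$ is secured, the rate computation above is routine.
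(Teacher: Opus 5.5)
Your proposal is correct and follows the route the paper intends: the paper omits the proof as ``similar to that of Theorem \ref{the206}'', and you carry out Steps 1--6 of that argument with $m(t)$ replaced by $w(t)=u_x(t,q(t,x_1))$, using Lemma \ref{lem303} to get $w'+\frac12(w+\lambda)^2=\widetilde H$ with $\widetilde H$ bounded on the finite interval $[0,T_2^\ast)$. As in the paper, the one nontrivial input, $w(t)\to-\infty$ exactly as $t\to T_2^\ast$, is taken from Theorem \ref{the207}, and your extra remarks make the argument slightly tighter than the paper's sketch: reading $T_2^\ast$ as the breaking time along this characteristic, using $\sup|\lambda|$ instead of $\delta$, and using $1/\phi\to0$ when integrating.
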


The proof of Theorem \ref{the208} is similar to that of Theorem \ref{the206}, so
they are omitted for simplicity.

\begin{remark}
Theorems \ref{the206} and \ref{the208} provide two distinct blowup rate results for \eqref{103} with time-dependent dissipation $\lambda(t)$.
While both results yield the same asymptotic rate $-2$, they differ in the blowup scenario: Theorem \ref{the206} is based on a purely pointwise gradient condition, whereas Theorem \ref{the208} exploits the dynamics along characteristics and involves a coupled condition on $u$ and $u_x$. This highlights how different mechanisms-local gradient amplification versus transport-driven singularity formation-can lead to the same universal blowup rate in this dissipative setting.
\end{remark}

\vskip 5mm

{\bf Acknowledgement.}
This work is partially supported by NSFC Grants (nos. 12501210, 12201539, 12561030 and 12225103), Science and Technology Development Plan Project of Jilin Province (no. 20260602002RC), Natural Science Foundation of Gansu Province (no. 26JRRG010) and Doctoral Fund of Hexi University (no. KYQD2025006).


\section*{References}

\begin{thebibliography}{00}
\bibitem{Beals1999} R. Beals, D. Sattinger and J. Szmigielski, Multipeakons and a theorem of Stieltjes, Inverse Probl., 15 (1999) 1--4.

\bibitem{Brandolese2012} L. Brandolese, Breakdown for the Camassa-Holm equation using decay criteria and persistence in weighted spaces, Int. Math. Res. Not. IMRN, 22 (2012) 5161--5181.

\bibitem{Brandolese20141} L. Brandolese, Local-in-space criteria for blowup in shallow water and dispersive rod equations, Commun. Math. Phys., 330 (2014) 401--414.

\bibitem{Brandolese20142} L. Brandolese and M.F. Cortez, On permanent and breaking waves in hyperelastic rods and rings, J. Funct. Anal., 266 (2014) 6954--6987.

\bibitem{Brandolese20143} L. Brandolese and M.F. Cortez, Blowup issues for a class of nonlinear dispersive wave equations, J. Differential Equations, 256 (2014) 3981--3998.

\bibitem{Bressan2007} A. Bressan and A. Constantin, Global conservative solutions of the Camassa-Holm equation, Arch. Ration. Mech. Anal., 183 (2007) 215--239.

\bibitem{Camassa1993} R. Camassa and D. Holm, An integrable shallow water equation with peaked soliton, Phys. Rev. Lett., 71 (1993) 1661--1664.

%\bibitem{Chen20111} R. Chen and Y. Liu, Wave breaking and global existence for a generalized two-component Camassa-Holm system, Int. Math. Res. Not. IMRN, 6 (2011) 1381--1416.

\bibitem{Chen2016} R. Chen, F. Guo, Y. Liu and C. Qu, Analysis on the blow-up of solutions to a class of integrable peakon equations, J. Funct. Anal., 270 (2016) 2343--2374.


%\bibitem{Chen2017} R. Chen, L. Fan, H. Gao and Y. Liu, Breaking waves and solitary waves to the rotation-two-component Camassa-Holm system, SIAM J. Math. Anal., 49 (2017) 3573--3602.

%\bibitem{Chen2018} G. Chen, M. Chen and Y. Liu, Existence and uniqueness of the global conservative weak solutions for the integrable Novikov equation, Indiana Univ. Math. J., 67 (2018) 2393--2433.

%\bibitem{Chen2023} M. Chen, H. Di and Y. Liu, Stability of peaked solitary waves for a class of cubic quasilinear shallow-water equations, Int. Math. Res. Not. IMRN, 7 (2023) 6186--6218.

%\bibitem{Chen2024} M. Chen, L. Fan, X. Wang and R. Xu, Spectral analysis of the periodic b-KP equation under transverse perturbations, Math. Ann., 390(4) (2024) 6315--6354.

\bibitem{Chong2026} G. Chong, Y. Fu and X. Wu, Spectral instability of the periodic peakons in the Novikov equation, J. Differential Equations, 454 (2026) 113938.

\bibitem{Coclite20052} G.M. Coclite, H. Holden and K.H. Karlsen, Global weak solutions to a generalized hyperelastic-rod wave equation, SIAM J. Math. Anal., 37 (2005) 1044--1069.

\bibitem{Constantin19971} A. Constantin, The Cauchy problem for the periodic Camassa-Holm equation, J. Differential Equations, 141 (1997) 218--235.

\bibitem{Constantin1998} A. Constantin and J. Escher, Wave breaking for nonlinear nonlocal shallow water equations, Acta Math., 181 (1998) 229--243.

\bibitem{Constantin19981} A. Constantin and J. Escher, Well-posedness, global existence, and blowup phenomena for a periodic quasi-linear hyperbolic equation, Comm. Pure Appl. Math., 51 (1998) 475--504.

\bibitem{Constantin19982} A. Constantin and J. Escher, On the structure of a family of quasilinear equations arising in a shallow water theory, Math. Ann., 312 (1998) 403--416.

%\bibitem{Constantin19983} A. Constantin and J. Escher, Global existence and blow-up for a shallow water equation, Ann. Sc. Norm. Super. Pisa Cl. Sci. (5), 26 (1998) 303--328.

%\bibitem{Constantin19984} A. Constantin and J. Escher, Global weak solutions for a shallow water equation, Indiana Univ. Math. J., 47 (1998) 1527--1545.

%\bibitem{Constantin1999} A. Constantin and H.P. McKean, A shallow water equation on the circle, Comm. Pure Appl. Math., 52 (1999) 949--982.

\bibitem{Constantin2000} A. Constantin, Existence of permanent and breaking waves for a shallow water equation: a geometric approach, Ann. Inst. Fourier, 50 (2000) 321--362.

\bibitem{Constantin20002} A. Constantin and J. Escher, On the blow-up rate and the blow-up set of breaking waves for a shallow water equation, Math. Z., 233 (2000) 75--91.

\bibitem{Constantin20003} A. Constantin and W.A. Strauss, Stability of peakons, Comm. Pure Appl. Math., 53 (2000) 603--610.

\bibitem{Constantin20004} A. Constantin and L. Molinet, Global weak solutions for a shallow water equation, Comm. Math. Phys., 211 (2000) 45--61.

\bibitem{Constantin2001} A. Constantin, On the scattering problem for the Camassa-Holm equation, Proc. Roy. Soc. London A, 457 (2001) 953--970.

\bibitem{Constantin2002} A. Constantin and L. Molinet, The initial value problem for a generalized Boussinesq equation, Differential Integral Equations, 15 (2002) 1061--1072.

\bibitem{Constantin2009} A. Constantin and D. Lannes, The hydrodynamical relevance of the Camassa-Holm and Degasperis-Procesi equations, Arch. Ration. Mech. Anal., 192 (2009) 165--186.

%\bibitem{DP1999} A. Degasperis and M. Procesi, Asymptotic integrability, in: A. Degasperis, G. Gaeta (Eds.), Asymptotic Integrability, Symmetry and Perturbation Theory, World Scientific, 1999, 23--37.

\bibitem{Fokas1981} A. Fokas and B. Fuchssteiner, Symplectic structures, their B\"acklund transformation and hereditary symmetries, Phys. D, 4 (1981) 47--66.

\bibitem{Freire1} I.L. Freire, N.S. Filho, L.C. Souza and C.E. Toffoli, Invariants and wave breaking analysis of a Camassa-Holm type equation with quadratic and cubic non-linearities, J. Differential Equations, 269 (2020) 56--77.

%\bibitem{Freire2} I.L. Freire, Wave breaking for shallow water models with time decaying solutions, J. Differential Equations, 269 (2020) 3769--3793.

%\bibitem{Freire3} I.L. Freire and C.E. Toffoli, Wave breaking and asymptotic analysis of solutions for a class of weakly dissipative nonlinear wave equations, J. Differential Equations, 358 (2023) 457--483.

\bibitem{Ghidaglia1988} J.M. Ghidaglia, Weakly damped forced Korteweg-de Vries equations behave as a finite dimensional dynamical system in the long time, J. Differential Equations, 74 (1988) 369--390.

%\bibitem{Gui2010} G. Gui and Y. Liu, On the global existence and wave-breaking criteria for the two-component Camassa-Holm system, J. Funct. Anal., 258 (2010) 4251--4278.

\bibitem{Himonas2007} A. Himonas, G. Misiolek, G. Ponce and Y. Zhou, Persistence properties and unique continuation of solutions of the Camassa-Holm equation, Comm. Math. Phys., 271 (2007) 511--512.

%\bibitem{Hone2008} A.N.W. Hone and J. Wang, Integrable peakon equations with cubic nonlinearity, J. Phys. A: Math. Theor., 41(37) (2008) 372002. 10pp.

%\bibitem{Hone2009} A.N.W. Hone, H. Lundmark and J. Szmigielski, Explicit multipeakon solutions of Novikov's cubically nonlinear integrable Camassa-Holm equation, Dyn. Partial Differ. Equ., 6(3) (2009) 253--289.

%\bibitem{Hu2012} Q. Hu, L. Lin and J. Jin, Well-posedness and blowup phenomena for a three-component Camassa-Holm system with peakons, J. Hyperbolic Differential Equations, 9 (2012) 451--467.

\bibitem{Ji2021} S. Ji and Y. Zhou, Wave breaking phenomena and global existence for the weakly dissipative generalized Novikov equation, Proc. Roy. Soc. London A, 477 (2021) 20210532.

\bibitem{Ji2022} S. Ji and Y. Zhou, Wave breaking and global solutions of the weakly dissipative periodic Camassa-Holm type equation, J. Differential Equations, 306 (2022) 439--455.

%\bibitem{Ji20221} S. Ji and Y. Zhou, Global solutions for the modified Camassa-Holm equation, Z. Angew. Math. Mech., 102 (2022) no. e202100567.

%\bibitem{Kang2017} J. Kang, X. Liu, P.J. Olver and C. Qu, Liouville correspondences between integrable hierarchies, SIGMA Symmetry Integrability Geom. Methods Appl., 13 (2017) 1--26.

\bibitem{Kato} T. Kato, Quasi-linear equations of evolution, with applications to partial differential equations, in: Spectral Theory and Differential Equations, Proceedings of the Symposium Dundee, 1974, Dedicated to Konrad Jrgens, in: Lecture Notes in Math., Vol. 448, Springer, Berlin, 1975, pp. 25--70.

\bibitem{Kato1} T. Kato, On the Korteweg-de Vries equation, Manuscripta Math., 28 (1979) 89--99.

\bibitem{Kato1983} T. Kato, On the Cauchy problem for the (generalized) Korteweg-de Vries equation, Studies in Applied Mathematics, Adv. Math. Suppl. Stud., Vol. 8, Academic Press, New York, 1983, pp. 93--128.

\bibitem{Kato1988} T. Kato and G. Ponce, Commutator estimate and the Euler and Navier-Stokes equations, Comm. Pure. Appl. Math., 41 (1988) 891--907.

%\bibitem{Lai2010} S.Y. Lai and Y.H. Wu, Global solutions and blow-up phenomena to a shallow water equation, J. Differential Equations, 249 (2010) 693--706.

\bibitem{Li2000} Y. Li and P. Olver, Well-posedness and blow-up solutions for an integrable nonlinearly dispersive model wave equation, J. Differential Equations, 162 (2000) 27--63.

%\bibitem{Luo2015} W. Luo and Z. Yin, Local well-posedness and blow-up criteria for a two-component Novikov system in the critical Besov space, Nonlinear Anal., 122 (2015) 1--22.

\bibitem{Mustafa20072} O.G. Mustafa, Global conservative solutions of the hyperelastic rod equation, Int. Math. Res. Not. IMRN, 2007 (2007) 1--26.

%\bibitem{Ni2011} L. Ni and Y. Zhou, Well-posedness and persistence properties for the Novikov equation, J. Differential Equations, 250 (2011) 3002--3201.

%\bibitem{Novikov2009} V. Novikov, Generalizations of the Camassa-Holm equation, J. Phys. A: Math. Theor., 42 (2009) 14pp.

\bibitem{Novruzov2022} E. Novruzov and V. Bayrak, Blow-up criteria for a two-component nonlinear dispersive wave system, J. Funct. Anal., 282 (2022) 109454, 19pp.

\bibitem{Ott1970} E. Ott and R.N. Sudan, Damping of solitary waves, Phys. Fluids, 13 (1970) 1432--1434.

\bibitem{Pazy1983} A. Pazy, Semigroup of Linear Operators and Applications to Partial Differential Equations, Springer, New York, 1983.

\bibitem{Popowicz2015} Z. Popowicz, Double extended cubic peakon equation, Phys. Lett. A, 379 (2015) 1240--1245.

%\bibitem{Qiao2003CMP} Z. Qiao, The Camassa-Holm hierarchy, related $N$-dimensional integrable systems and algebro-geometric solution on a symplectic submanifold, Comm. Math. Phys., 239 (2003) 309--341.

%\bibitem{Qiao2004AAM} Z. Qiao, Integrable hierarchy (the DP hierarchy), 3 by 3 constrained systems, and parametric and stationary solutions, Acta Appl. Math., 83 (2004) 199--220.

%\bibitem{Qiao2006JMP} Z. Qiao, A new integrable equation with cuspons and W/M-shape-peaks solitons, J. Math. Phys., 47 (2006) 112701.

%\bibitem{Qiao2024} Z.J. Qiao, E. G. Reyes, Fifth-order equations of Camassa-Holm type and pseudo-peakons, Appl. Numer. Math., 199 (2024) 165--176.

%\bibitem{Qu2010} C. Qu and Y. Fu, On a three-component equation with peakons, Commun. Theor. Phys., 46 (2010) 309--327.

\bibitem{Qu2020} C. Qu and Y. Fu, On the Cauchy problem and peakons of a two-component Novikov system, SCIENCE CHINA Mathematics, 63 (2020) 1965--1996.

%\bibitem{Wahlen2006} E. Wahl\'en, Global existence of weak solutions to the Camassa-Holm equations, Int. Math. Res. Not. IMRN, (2006) 1--12.

\bibitem{Wu2009} S.Y. Wu and Z.Y. Yin, Global existence and blow up phenomena for the weakly dissipative Camassa-Holm equation, J. Differential Equations, 246 (2009) 4309--4321.

\bibitem{Wu2012} X. Wu and Z. Yin, Well-posedness and global existence for the Novikov equation, Ann. Sc. Norm. Super. Pisa Cl. Sci. (5), 11 (2012) 707--727.

\bibitem{Xin2000} Z. Xin and P. Zhang, On the weak solutions to a shallow water equation, Comm. Pure Appl. Math., 53 (2000) 1411--1433.

\bibitem{Xin2002} Z. Xin and P. Zhang, On the uniqueness and large time behavior of the weak solutions to a shallow water equation, Comm. Partial Differential Equations, 27 (2002) 1815--1844.

%\bibitem{Yan2012} W. Yan, Y. Li and Y. Zhang, Global existence and blow-up phenomena for the weakly dissipative Novikov equation, Nonlinear Anal., 75 (2012) 2464--2473.

%\bibitem{Zhang2023} D. Zhang, Y. Zhou, S. Ji and X. Li, On the Cauchy problem for a weakly dissipative coupled Camassa-Holm system, Monatsh. Math., 202 (2023) 857--873.

\bibitem{Zhou2022} Y. Zhou and S. Ji, Well-posedness and wave breaking for a class of integrable equations, Math. Z., 302 (2022) 1527--1550.

\bibitem{Zhou2024} Y. Zhou, S. Ji and Z. Qiao, Globally conservative weak solutions for a class of nonlinear dispersive wave equations beyond wave breaking, J. Differential Equations, 389 (2024) 338--360.

\bibitem{Zhou2025} Y. Zhou, X. Li and S. Ji, Globally conservative weak solutions for a class of two-component nonlinear dispersive wave equations beyond wave breaking, J. Differential Equations, 427 (2025) 538--559.

\bibitem{Zhou2026} Y. Zhou, X. Li, S. Ji and Z. Qiao, A generalized two-component Novikov system and its analytical properties, Physica D, 489 (2026) 135120.
\end{thebibliography}
\end{document}